      \theoremstyle{plain}
      \newtheorem{theorem}{Theorem}[section]
      \newtheorem{lemma}[theorem]{Lemma}
      \newtheorem{coro}[theorem]{Corollary}
      \newtheorem{prop}{Proposition}[section]
      \newtheorem{defi}[theorem]{Definition}
      \theoremstyle{plain}
      \newtheorem{remark}[theorem]{Remark}
      \def\@setcopyright{}
      \def\serieslogo@{}
\begin{document}

%



   \author{Adrien Boyer}
   \address{I2M, CMI, Aix-Marseille Univ. France}
   \email{aboyer@cmi.univ-mrs.fr}


  
   \title{Quasi-regular representations and rapid decay}


   \begin{abstract}
	We study \emph{property RD} in terms of rapid decay of matrix coefficients.
    We give new formulations of property RD in terms of a $L^{1}$-integra\-bility condition of a Banach representation. Combining this new definition with the existence of cyclic subgroups of exponential growth in non-uniform lattices in semisimple Lie groups, we deduce that there exist matrix coefficients associated to several kinds of quasi-regular representations which satisfy a ``non-RD condition'' for non-uniform lattices. We obtain also that such coefficients can not satisfy \emph{the weak inequality} of Harish-Chandra.      
   \end{abstract}

   \subjclass{Primary 43; Secondary 22}

   \keywords{Coefficients of unitary representations, semisimple Lie groups, property RD, Furstenberg boundary}

  
   \dedicatory{}

   \date{\today}


   \maketitle



   \section{Introduction}

	We say that a unitary representation $\pi:G \rightarrow U(\mathcal{H})$ on a complex Hilbert space $\mathcal{H}$ of a locally compact group $G$ with a left invariant Haar measure $dg$ and with a length function $L$ has property RD with respect to to $L$ if there exist $d\geq 1$ and $C>0$, such that $\pi$ verifies for all $f\in L^{1}(G,dg)$, $$\|\pi(f)\| \leq C \|f\|_{L,d}$$ where $\|\cdot\|$ denotes the operator norm, and $$\|f\|_{L,d}=\left(\int_{G}|f(g)|^{2}(1+L(g))^{2d}dg\right)^{\frac{1}{2}}.$$ 
	
	Equivalently, a representation $\pi:G \rightarrow U(\mathcal{H})$ has property RD with respect to $L$ if there exist $d\geq 1$ and $C>0$ such that for each pair of unit vectors $\xi,\eta$ on $\mathcal{H}$ we have $$\int_{G}\left|\left\langle \pi(g)\xi,\eta\right\rangle\right|^{2}d\mu_{L,d}(g)\leq C$$ where $$d\mu_{d,L}(g)=\frac{dg}{(1+L(g))^{d}} \cdot $$  We say that a locally compact group has property RD with respect to $L$ if its regular representation has property RD with respect to $L$.

The property of rapid decay was introduced by U. Ha\-agerup at the end of the seventies in his work \cite{H}. Its essence could probably be traced back to Harish-Chandra's estimates of spherical functions on semisimple Lie groups and to the work of C. Herz \cite{He}. The terminology ``property RD" was introduced later, see the work \cite{Jo} of P. Jolissaint. Property RD is relevant in the context of the Baum-Connes conjecture thanks to the important work of V. Lafforgue in \cite{L}.

\subsection{Equivalent definitions of property RD, Hilbert-Schmidt and trace class operators}

Let $\pi:G \rightarrow U(\mathcal{H})$ be a unitary representation of $G$. For $\xi$ and $\eta$ in $\mathcal{H}$, we define the matrix coefficient associated to $\pi$, as $g\mapsto \left\langle \pi(g)\xi,\eta \right\rangle$. Let $\mathcal{L}^{2}(\mathcal{H})$ be the Hilbert space of Hilbert-Schmidt operators on the Hilbert space $\mathcal{H}$. Recall that the scalar product of Hilbert-Schmidt operators on $\mathcal{H}$ is defined as $\left\langle S,T\right\rangle =Tr(ST^{*})$ where $Tr$ denotes the usual trace on $B(\mathcal{H})$, and $T^{*}$ denotes the adjoint operator of $T$. Consider the representation $$c:G \rightarrow U(\mathcal{L}^{2}(\mathcal{H})),$$ defined by $$c(g)T=\pi(g)T\pi(g^{-1}), $$ for all $T$ in $\mathcal{L}^{2}(\mathcal{H})$. For $S$ and $T$ in $\mathcal{L}^{2}(\mathcal{H})$, the matrix coefficient associated to $c$ is $g\mapsto \left\langle c(g)S,T \right\rangle$. The Hilbert space $\mathcal{L}^{2}(\mathcal{H})$ contains the Banach space of trace class operators on the Hilbert space $\mathcal{H}$ denoted by $\mathcal{L}^{1}(\mathcal{H})$. The restriction of the representation $c$ to $\mathcal{L}^{1}(\mathcal{H})$ is a isometric Banach space representation for the norm $\|\cdot \|_{1}$ (see Subsection \ref{repre}).
In Section \ref{sec2} we prove the following proposition:
\begin{prop} \label{main}
Let G be a locally compact second countable group, and let $\mu$ be a Borel measure on $G$ which is finite on compact subsets. Let $\pi:G \rightarrow U(\mathcal{H})$ be a unitary representation on a Hilbert space, and let $c$ denote the corresponding Banach space representation defined above.
The following conditions are equivalent.

\begin{enumerate} 
\item There exists $C >0$ such that for all $S,T$  unit vectors in $\mathcal{L}^{1}(\mathcal{H})$ (unit for $\|\cdot\|_{1}$)  we have

 $\int_{G}\left|\left\langle c(g)S,T\right\rangle\right|d\mu(g)<C .$

\item There exists $C>0$, such that for all unit vectors $\xi,\eta$ in $\mathcal{H}$ we have 

$\int_{G}\left|\left\langle \pi(g)\xi,\eta \right\rangle\right|^{2}d\mu(g)<C .$

\item For all vectors $S,T$ in $\mathcal{L}^{1}(\mathcal{H})$ we have 

$\int_{G}\left|\left\langle c(g)S,T\right\rangle\right| d\mu(g)<\infty .$

\item For all vectors $\xi,\eta$ in $\mathcal{H}$ we have

$\int_{G}\left|\left\langle \pi(g)\xi,\eta \right\rangle\right|^{2}d\mu(g)<\infty .$
\end{enumerate}
\end{prop}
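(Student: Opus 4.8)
The plan is to prove the four statements equivalent by establishing the cycle of implications, using the fact that trace-class operators are densely spanned by rank-one operators.

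The key observation is the explicit relationship between the two families of matrix coefficients. For rank-one operators $S = \xi \otimes \bar{\zeta}$ and $T = \eta \otimes \bar{\theta}$ (so that $Sx = \langle x, \zeta\rangle \xi$), a direct computation gives $\langle c(g)S, T\rangle = \langle \pi(g)\xi, \eta\rangle \overline{\langle \pi(g)\zeta, \theta\rangle}$. In particular, taking $\zeta = \xi$ and $\theta = \eta$ with $\xi, \eta$ unit vectors, the rank-one projection-type operators $S = \xi \otimes \bar\xi$ and $T = \eta \otimes \bar\eta$ are unit vectors for $\|\cdot\|_1$, and we obtain the crucial identity $\langle c(g)S, T\rangle = |\langle \pi(g)\xi, \eta\rangle|^2$, which is already nonnegative. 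This single identity is what links the $c$-coefficients (integrated to the first power) with the $\pi$-coefficients (integrated to the second power).

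First I would record this computation as the backbone of the argument. The implications (1) $\Rightarrow$ (2) and (3) $\Rightarrow$ (4) are then immediate: given unit vectors $\xi, \eta \in \mathcal{H}$, plug in $S = \xi \otimes \bar\xi$ and $T = \eta \otimes \bar\eta$, note these have $\|\cdot\|_1$-norm one, and the identity turns the hypothesis about $c$ directly into the conclusion about $\pi$. The implications (1) $\Rightarrow$ (3) and (2) $\Rightarrow$ (4) are trivial by homogeneity (scaling a uniform bound on unit vectors to arbitrary vectors). The remaining and more substantial directions are (4) $\Rightarrow$ (3) and its uniform counterpart (2) $\Rightarrow$ (1). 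For (4) $\Rightarrow$ (3), I would first handle rank-one operators via the polarization-type identity above together with Cauchy--Schwarz: writing $|\langle c(g)S,T\rangle| = |\langle\pi(g)\xi,\eta\rangle|\,|\langle\pi(g)\zeta,\theta\rangle|$ and applying Cauchy--Schwarz to the integral reduces finiteness to the separate finiteness of $\int_G |\langle\pi(g)\xi,\eta\rangle|^2 d\mu$ and $\int_G |\langle\pi(g)\zeta,\theta\rangle|^2 d\mu$, both guaranteed by (4). One then extends from rank-one to finite-rank operators by linearity and bilinearity, and finally to all of $L^1(\mathcal{H})$ by a density argument together with the isometric bound $\|c(g)\|_1 = 1$.

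The main obstacle will be this last density/continuity step: passing from finite-rank operators (where the identity is available termwise) to general trace-class $S, T$ requires controlling the integral $\int_G |\langle c(g)S,T\rangle|\,d\mu$ under approximation in $\|\cdot\|_1$, and the natural tool here is the Banach--Steinhaus theorem, which the abstract explicitly advertises. Concretely, for (2) $\Rightarrow$ (1) one regards, for each compact set of finite measure, the map $S \mapsto \int |\langle c(g)S, T\rangle| d\mu(g)$ as a family of functionals on the Banach space $L^1(\mathcal{H})$; pointwise finiteness (from (4) $\equiv$ (2) for rank-one and finite-rank elements extended by density) yields a uniform bound via uniform boundedness, which is exactly the uniform constant $C$ in (1). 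I would carefully verify the measurability and the lower-semicontinuity needed to apply Banach--Steinhaus, and use the second-countability of $G$ and the local finiteness of $\mu$ to justify the integrals and the dense-subspace reductions; these hypotheses are precisely what make the functional-analytic machinery applicable.
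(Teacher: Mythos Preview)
Your identification of the identity $\langle c(g)(\xi\otimes\bar\xi),\eta\otimes\bar\eta\rangle=|\langle\pi(g)\xi,\eta\rangle|^2$ and of the easy implications is correct; the gap is in \emph{where} you apply Banach--Steinhaus. You propose to run uniform boundedness on the family of maps $S\mapsto 1_K(g)\langle c(g)S,T\rangle$ from $L^1(\mathcal{H})$ into $L^1(G,\mu)$, indexed by compacta $K$. But the pointwise hypothesis there is $\sup_K\int_K|\langle c(g)S,T\rangle|\,d\mu<\infty$ for \emph{every} $S\in L^1(\mathcal{H})$, which is precisely condition (3). You try to supply this ``by density from finite-rank,'' yet knowing $\int_G|\langle c(g)S_n,T\rangle|\,d\mu<\infty$ for finite-rank $S_n\to S$ in trace norm does not yield finiteness at $S$ without a uniform bound on the approximants---which is what you are trying to prove. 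The argument is circular, and pointwise boundedness on a dense subspace is not enough for Banach--Steinhaus.

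The paper avoids this by applying Banach--Steinhaus one level down, on $\mathcal{H}$ rather than on $L^1(\mathcal{H})$. Fixing $\eta,\xi',\eta'$, the linear maps $\xi\mapsto 1_{K_n}(g)\langle\pi(g)\xi,\xi'\rangle\overline{\langle\pi(g)\eta,\eta'\rangle}$ from $\mathcal{H}$ to $L^1(G,\mu)$ are pointwise bounded \emph{directly} by (4) via Cauchy--Schwarz, with no density step needed. Uniform boundedness then shows the $4$-linear form $(\xi,\eta,\xi',\eta')\mapsto\int_G\langle\pi(g)\xi,\xi'\rangle\overline{\langle\pi(g)\eta,\eta'\rangle}\,d\mu$ is separately continuous, hence jointly continuous, giving a single constant $C$ bounding all rank-one coefficients. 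Only then does one pass to general $S,T\in L^1(\mathcal{H})$, and not by density but via the Schmidt decomposition $S=\sum_k\alpha_k\,\xi_k\otimes\bar\eta_k$ with $\|\alpha\|_{\ell^1}=\|S\|_1$: the integral is dominated by an $\ell^1$-weighted sum of rank-one terms each bounded by $C$, which proves $(4)\Rightarrow(1)$ outright.
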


Let $\mu$ be the following measure  $$d\mu(g)=\frac{dg}{(1+L(g))^{d}},$$ where $dg$ denotes a Haar measure on $G$, and where $d$ is some positive real number. Applying Proposition \ref{main} with this special choice of measure provides us with four equivalent definitions of property RD for unitary representations. 
\\

\subsection{A simple condition for property RD for \emph{positive} representations}
Consider now a representation $\pi:G \rightarrow U(\mathcal{H})$ such that $\mathcal{H} \subset L^{2}(X,m)$ where $(X,m)$ is a measured space i.e. a vector $\xi$ in $\mathcal{H}$ is a complex valued function and for all $\xi$ and $\eta$ in $\mathcal{H}$ we have $\left\langle \xi,\eta \right\rangle=\int_{X}\xi(x)\overline{\eta(x)}dm(x)$. We say that a function $\xi$ is positive if $\xi \geq 0$ almost everywhere with respect to $m$. Let $$\mathcal{H}_{+}=\left\{\xi \in \mathcal{H} \right|   \xi \geq 0 ~~ \mbox{almost everywhere with respect to $m$} \} $$ be the cone of positive functions of $\mathcal{H}$. In the above definition $\mathcal{H}_{+}$ can be $\lbrace 0\rbrace$ but such a pathology never appears in this article. In this context, we say that $\pi$ is \textit{positive} if for all $g$ in $G$, we have $$\pi(g)\mathcal{H}_{+}\subset \mathcal{H}_{+}.$$ Typical examples of positive representations are provided by unitary representations coming from measurable actions $G\curvearrowright (X,m)$ and ``half densities" where $m$ is a $G$-quasi-invariant measure  (see Subsection \ref{sec3}).

 The following proposition proved in Section \ref{sec4} brings a simple condition for property RD for positive representations:

\begin{prop} \label{propositiv}
Let $G$ be a locally compact second countable group with a length function $L$. Let $\pi:G \rightarrow U(\mathcal{H})$ be a unitary representation such that $\mathcal{H}= L^{2}(X,m)$ with $(X,m)$ a measured space. Assume that $\pi$ is positive.
The following conditions are equivalent:

\begin{enumerate}
	\item $\pi$ has property RD with respect to $L$ i.e. there exist $d\geq 1$ and $C>0$ such that for all $\xi,\eta$ unit vectors in $\mathcal{H}$, we have $$\int_{G}\frac{\left|\left\langle \pi(g) \xi,\eta\right\rangle\right|^{2}}{(1+L(g))^{d}}dg<C.$$
	\item For each positive function $\xi \in \mathcal{H}_{+}$, there exists $d_{\xi}\geq 1$ such that $$\int_{G}\frac{ \left\langle \pi(g) \xi,\xi \right\rangle ^{2}}{(1+L(g))^{d_{\xi}}}dg<\infty.$$  
\end{enumerate}
\end{prop}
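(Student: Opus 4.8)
The plan is to prove $(1)\Rightarrow(2)$ in one line and to concentrate on $(2)\Rightarrow(1)$, whose heart is a Banach--Steinhaus/Baire argument exploiting positivity. For $(1)\Rightarrow(2)$: given property RD with exponent $d$ and constant $C$, take any $\xi\in\mathcal H_+\setminus\{0\}$ and apply the uniform bound of $(1)$ to the unit vectors $\xi/\|\xi\|,\xi/\|\xi\|$; since $\langle\pi(g)\xi,\xi\rangle$ scales like $\|\xi\|^2$, rescaling gives $\int_G \frac{\langle\pi(g)\xi,\xi\rangle^2}{(1+L(g))^d}\,dg\le C\|\xi\|^4<\infty$, so $(2)$ holds with the single exponent $d_\xi=d$.

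For the converse, write $I_d(\xi)=\int_G \frac{\langle\pi(g)\xi,\xi\rangle^2}{(1+L(g))^d}\,dg$ for $\xi\in\mathcal H_+$; note $\langle\pi(g)\xi,\xi\rangle\ge 0$ because $\pi(g)\xi\ge 0$ and $\xi\ge 0$, so the square is legitimate. The whole matter reduces to producing a \emph{single} pair $(d,C)$ with $I_d(\xi)\le C$ for all unit $\xi\in\mathcal H_+$, and this reduction rests on two structural facts. First, a domination estimate: for every $g$ and every $\xi\in\mathcal H$ one has $|\pi(g)\xi|\le\pi(g)|\xi|$ almost everywhere. Indeed, since $\pi(g)$ and $\pi(g^{-1})$ both preserve $\mathcal H_+$, the operator $\pi(g)$ preserves real functions and is order-preserving on them; from $\operatorname{Re}(e^{-i\theta}\xi)\le|\xi|$ one gets $\operatorname{Re}(e^{-i\theta}\pi(g)\xi)\le\pi(g)|\xi|$ for each $\theta$, and taking the supremum over a countable dense set of $\theta$ yields $|\pi(g)\xi|\le\pi(g)|\xi|$. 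Consequently, for arbitrary $\xi,\eta\in\mathcal H$ and $\zeta:=|\xi|+|\eta|\in\mathcal H_+$,
\[
|\langle\pi(g)\xi,\eta\rangle|\le\langle\pi(g)|\xi|,|\eta|\rangle\le\langle\pi(g)\zeta,\zeta\rangle,
\]
the last step because expanding $\langle\pi(g)\zeta,\zeta\rangle$ produces four non-negative terms. Squaring and integrating shows that a uniform diagonal bound $I_N(\zeta)\le C'$ (for unit $\zeta\in\mathcal H_+$), together with $\|\zeta\|\le 2$ and the homogeneity $I_N(\lambda\xi)=\lambda^4 I_N(\xi)$, forces property RD for all unit $\xi,\eta$. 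The second structural fact is monotonicity: if $0\le\xi\le\eta$ a.e., then $0\le\pi(g)\xi\le\pi(g)\eta$, whence $\langle\pi(g)\xi,\xi\rangle\le\langle\pi(g)\eta,\eta\rangle$ and therefore $I_d(\xi)\le I_d(\eta)$.

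It remains to manufacture the uniform diagonal bound from the pointwise finiteness in $(2)$, where the exponent $d_\xi$ is allowed to vary. For fixed $n$ the functional $I_n$ is lower semicontinuous on $\mathcal H$: if $\xi_k\to\xi$ then $\langle\pi(g)\xi_k,\xi_k\rangle\to\langle\pi(g)\xi,\xi\rangle$ pointwise in $g$, and Fatou's lemma gives $I_n(\xi)\le\liminf_k I_n(\xi_k)$. Hence each $E_n=\{\xi\in\mathcal H_+:I_n(\xi)\le n\}$ is closed in the complete metric space $\mathcal H_+$. Because $d\mapsto I_d(\xi)$ is non-increasing, condition $(2)$ gives $\bigcup_n E_n=\mathcal H_+$, so Baire's theorem yields $N$, a point $\xi_0\in\mathcal H_+$ and a radius $r>0$ with $I_N\le N$ on $B(\xi_0,r)\cap\mathcal H_+$. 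Now for any $\zeta\in\mathcal H_+$ with $\|\zeta\|\le r$ we have $0\le\zeta\le\xi_0+\zeta$ and $\xi_0+\zeta\in B(\xi_0,r)\cap\mathcal H_+$, so monotonicity gives $I_N(\zeta)\le I_N(\xi_0+\zeta)\le N$; homogeneity then upgrades this to $I_N(\zeta)\le N/r^4$ for every unit $\zeta\in\mathcal H_+$. Combining with the domination reduction delivers property RD with $d=N$ and $C=16N/r^4$, completing $(2)\Rightarrow(1)$.

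I expect the main obstacle to be precisely this last passage, from a family of finiteness statements with $\xi$-dependent exponents to one uniform pair $(d,C)$: the Baire step only produces a \emph{local} bound near $\xi_0$, and it is the interplay of the cone structure with monotonicity (through the inequality $\zeta\le\xi_0+\zeta$) that converts it into a bound on a full ball, after which homogeneity handles the sphere. A secondary delicate point is proving the domination lemma $|\pi(g)\xi|\le\pi(g)|\xi|$ for a general positive representation rather than one arising from a measure-class-preserving action, where it would hold with equality; the $\sup_\theta$ argument above is what makes it work with only the hypothesis $\pi(g)\mathcal H_+\subset\mathcal H_+$.
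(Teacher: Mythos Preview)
Your proof is correct, but the route to $(2)\Rightarrow(1)$ differs from the paper's. Both arguments share the same reduction to diagonal coefficients on $\mathcal H_+$: the paper packages your domination estimate as a separate lemma (decomposing into real/imaginary and positive/negative parts, then using $\langle\pi(g)\xi',\eta'\rangle\le\langle\pi(g)(\xi'+\eta'),\xi'+\eta'\rangle$), which is essentially your inequality $|\langle\pi(g)\xi,\eta\rangle|\le\langle\pi(g)\zeta,\zeta\rangle$ for $\zeta=|\xi|+|\eta|$. The divergence is in how a single exponent is extracted. The paper argues by contraposition: if no single $d$ works, pick unit positive $\xi_n$ with $I_n(\xi_n)=\infty$, form $\xi=\sum_n a_n\xi_n$ with $a_n>0$ summable, and use positivity to get $I_d(\xi)\ge a_n^4 I_n(\xi_n)=\infty$ for every $d\le n$, contradicting $(2)$. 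This yields only \emph{finiteness} for a common $d$; the upgrade to a \emph{uniform} constant $C$ is then outsourced to the equivalence $(4)\Leftrightarrow(2)$ of Proposition~\ref{main}, whose proof runs through trace-class operators and a multilinear Banach--Steinhaus argument. Your Baire argument on the closed cone $\mathcal H_+$, together with the monotonicity trick $I_N(\zeta)\le I_N(\xi_0+\zeta)$ and homogeneity, produces the uniform bound in one self-contained stroke, bypassing Proposition~\ref{main} entirely. The paper's approach has the virtue of isolating a general principle (Proposition~\ref{main}) valid for arbitrary unitary representations; yours is more economical for this specific proposition and exploits the cone structure of $\mathcal H_+$ more fully.
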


\subsection{Positive vectors}
We shall consider another notion of positivity. Let $\pi$ be a unitary representation of a locally compact group $G$ on a complex Hilbert space $\mathcal{H}$. Following Y. Shalom (see \cite[Theorem 2.2]{S}), we say that a nonzero vector $\xi \in \mathcal{H}$ is a \textit{positive vector} if it satisfies $$\left\langle \pi(g)\xi,\xi \right\rangle\geq 0, \forall g\in G.$$ This notion is particularly interesting for property RD. In fact, in order to prove property RD with respect to $L$ for a group (i.e. for the left regular representation) it  suffices to prove property RD with respect to $L$ for a representation with a positive vector.\\
Our goal is now to construct unitary representations without property RD.
 \subsection{Coefficients with slow decay and lattices}
Let $G$ be a locally compact group with a length function $L$ and let $\mu$ be a Haar measure on $G$ i.e. $\mu(B)=\int_{G}1_{B}dg$, where $1_{B}$ denotes the characteristic function of a Borel subset $B$ of $G$. Denote by $B_{L}(R)$ the ball (with respect to $L$) of radius $R$ whose center is the neutral element of $G$. We say that $G$ has polynomial growth with respect to $L$ if there exists a polynomial $P$ such that for all $R>0$ we have $$\mu(B_{L}(R))\leq P(R) .$$ It is easy to check that $G$ has polynomial growth with respect to $L$ if and only if there exists a positive number $d$ such that $$ \int_{G}\frac{dg}{(1+L(g))^{d}}<\infty . $$ Cauchy-Schwarz inequality implies that a unitary representation of a group of polynomial growth with respect to $L$ satisfies property RD with respect to $L$.

If a locally compact group $G$ admits a unitary representation satisfying property RD with respect to $L$ and with a non zero invariant vector, then $G$ must be a group of polynomial growth with respect to $L$. Therefore, we are interested in representations without non zero invariant vectors. 

Let $G$ be locally compact group. Consider an action $\alpha: G\curvearrowright (X,m)$  of $G$ on $(X,m)$ where $m$ is a $G$-quasi-invariant measure. Consider the unitary representation $$\pi_{\alpha}:G\rightarrow U\left(L^{2}(X,m)\right)$$ associated to this action on the Hilbert space $L^{2}(X,m)$. Observe that $\pi_{\alpha}$ is a positive representation (see Subsection \ref{sec3}).\\
Our goal is
now to construct representations without property RD.
\begin{theorem}\label{maintheo}
Let $\Gamma$ be a discrete countable group with a length function $L$. Consider an action  $\alpha:\Gamma \curvearrowright (X,m)$  on a $\sigma$-finite measured space $(X,m)$ with a $\Gamma$-quasi-invariant measure $m$. Consider $\pi_{\alpha}:\Gamma \rightarrow U(L^{2}(X,m))$ the unitary representation associated to $\alpha:\Gamma \curvearrowright (X,m)$. Assume that $\Gamma$ contains a cyclic subgroup of exponential growth with respect to $L$. Then there exists $\xi $ in $L^{2}(X,m)_{+}$ such that for all $d\geq 1$ we have $$\sum_{\gamma \in \Gamma}\frac{\left\langle \pi_{\alpha}(\gamma)\xi,\xi\right\rangle^{2}}{(1+L(\gamma))^{d}}=\infty.$$
\end{theorem}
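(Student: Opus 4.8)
The plan is to throw away all of $\Gamma$ except the distorted cyclic subgroup and to build $\xi$ by superposing positive ``blocks'' adapted to that subgroup, using the positivity of $\pi_\alpha$ to keep the relevant lower bounds alive. Write $U=\pi_\alpha(a)$, where $a$ generates the cyclic subgroup of exponential growth; exponential growth forces $a$ to have infinite order and, more importantly, makes its powers logarithmically distorted, so that $L(a^{n})\le C_{1}\log(|n|+2)$ for a suitable constant $C_{1}$. Since every summand is nonnegative, it suffices to make the partial sum over $\langle a\rangle$ diverge, i.e.\ to produce $\xi\in L^{2}(X,m)_{+}$ with $\sum_{n\in\mathbb{Z}}\langle U^{n}\xi,\xi\rangle^{2}(1+L(a^{n}))^{-d}=\infty$ for every $d$. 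The whole point of exponential growth is that the weight $(1+L(a^{n}))^{-d}$ decays only like $(\log|n|)^{-d}$: if $b_{n}\ge 0$ and $\sum_{|n|\le N}b_{n}$ grows at least like a fixed positive power of $N$, then $\sum_{n} b_{n}(1+C_{1}\log(|n|+2))^{-d}=\infty$ for \emph{all} $d$, by Abel summation together with $\int^{\infty}N^{\delta-1}(\log N)^{-d}\,dN=\infty$.

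Fix a positive unit vector $\eta=\mathbf 1_{A}$ and form the blocks $\zeta_{M}=\sum_{j=0}^{M-1}U^{j}\eta\in L^{2}(X,m)_{+}$. Positivity of $\pi_\alpha$ enters in two ways. First, $U^{j}\eta\ge 0$, so any inner product $\langle U^{m}u,v\rangle$ with $u,v\ge 0$ is nonnegative; keeping only the diagonal pairs with $j-k=-m$ gives the autocorrelation estimate $\langle U^{m}\zeta_{M},\zeta_{M}\rangle\ge (M-|m|)_{+}\,m(A)$. Second, writing $\xi=\zeta+\zeta'$ with $\zeta,\zeta'\ge 0$, all four terms of $\langle U^{m}\xi,\xi\rangle$ are nonnegative, so $\langle U^{m}\xi,\xi\rangle\ge\langle U^{m}\zeta,\zeta\rangle$: the lower bound produced by one block survives the addition of further positive blocks. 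If the translates $U^{0}\eta,\dots,U^{M-1}\eta$ are pairwise orthogonal, then $\|\zeta_{M}\|^{2}=M\,m(A)$, and combining the two estimates shows that the \emph{normalized} block has a non-RD quantity
\[
\frac{1}{\|\zeta_{M}\|^{4}}\sum_{|n|<M}\frac{\langle U^{n}\zeta_{M},\zeta_{M}\rangle^{2}}{(1+L(a^{n}))^{d}}\gtrsim \frac{M}{(\log M)^{d}}\longrightarrow\infty,
\]
so property RD fails with an unbounded constant along the family $(\zeta_{M})_{M}$.

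To promote this unbounded family to a single witness $\xi$, I would fix rapidly increasing heights $M_{i}=2^{5i}$ and orthogonalized blocks $\zeta^{(i)}$ of height $M_{i}$, and set $\xi=\sum_{i}t_{i}\zeta^{(i)}$ with $t_{i}=2^{-i}\|\zeta^{(i)}\|^{-1}$, so that $\|\xi\|\le\sum_{i}2^{-i}<\infty$ and $\xi\in L^{2}(X,m)_{+}$. By the second positivity consequence, $\langle U^{m}\xi,\xi\rangle\ge t_{i}^{2}\langle U^{m}\zeta^{(i)},\zeta^{(i)}\rangle$ for every $i$ simultaneously, whence
\[
\sum_{n}\frac{\langle U^{n}\xi,\xi\rangle^{2}}{(1+L(a^{n}))^{d}}\ge t_{i}^{4}\sum_{n}\frac{\langle U^{n}\zeta^{(i)},\zeta^{(i)}\rangle^{2}}{(1+L(a^{n}))^{d}}\gtrsim \frac{2^{-4i}M_{i}}{(\log M_{i})^{d}}=\frac{2^{i}}{(5i\log 2)^{d}}.
\]
Letting $i\to\infty$ forces the left-hand side to be infinite, and this holds for every $d$ precisely because the scales $M_{i}$ were chosen independently of $d$.

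The main obstacle is the orthogonality/norm control of the blocks, hand in hand with the distortion bound. Making $U^{0}\eta,\dots,U^{M-1}\eta$ orthogonal is exactly a Rokhlin-tower condition: I would take $A$ to be the base of a tower $A,aA,\dots,a^{2M-1}A$ of disjoint sets, which exists whenever the $\mathbb{Z}$-action generated by $a$ is aperiodic (the periodic part produces eigenvectors of $U$, hence spectral atoms, and is treated directly and more easily). The delicate point is that the autocorrelation estimate treats $\zeta_{M}$ as an honest interval of consecutive powers, so I need the differences $a^{m}$, $|m|<M$, to have logarithmically small length; this is uniform logarithmic distortion along $a$, and is where the exponential-growth hypothesis (in the usable form $L(a^{n})\le C_{1}\log(|n|+2)$) is indispensable. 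Reconciling ``additively structured small-length set'' with ``merely many small-length elements'' is the genuine crux: a sparse logarithmically short subsequence has too little additive energy and only returns the lossy diagonal bound, so one really needs a progression of short elements. In the intended applications, namely unipotent cyclic subgroups of non-uniform lattices, the uniform distortion $L(a^{n})\asymp\log|n|$ is exactly what is available.
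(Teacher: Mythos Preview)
Your route is quite different from the paper's, which is short and non-constructive. The paper argues via the contrapositive of Proposition~\ref{propositiv}: it suffices to show that $\pi_\alpha$ fails property~RD. Restricting to the exponentially growing cyclic subgroup $\mathbb Z\subset\Gamma$, the representation $\pi_\alpha|_{\mathbb Z}$ has a positive vector (any $\mathbf 1_A$ with $0<m(A)<\infty$), so by Shalom's lemma property~RD for $\pi_\alpha|_{\mathbb Z}$ would force property~RD for the regular representation $\lambda_{\mathbb Z}$; but $\mathbb Z$ is amenable, and by Jolissaint an amenable group with RD has polynomial growth, contradicting the hypothesis. The single witness $\xi$ is then produced by the superposition-plus-Banach--Steinhaus argument hidden in the proof of Proposition~\ref{propositiv}. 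In effect, your block construction is a hands-on re-derivation of the Jolissaint obstruction directly inside $\pi_\alpha$, bypassing Shalom's transfer to $\lambda_{\mathbb Z}$.

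Your argument has a real gap, which you yourself name at the end: exponential growth of $\langle a\rangle$ only says that \emph{many} powers $a^n$ lie in the ball of radius $R$, not that \emph{every} power satisfies $L(a^n)\le C_1\log(|n|+2)$, and your autocorrelation lower bound needs the whole interval $\{a^n:|n|<M\}$ to consist of short elements. As written, then, the proof is complete only under the stronger uniform-distortion hypothesis (which does hold for unipotents in semisimple groups, so the applications survive). A repair is available: from subadditivity and $|B_{R_0}\cap\langle a\rangle|\ge 5$ one extracts some $n_0\ge 2$ with $L(a^{n_0})\le R_0$, and writing $n$ in base $n_0$ yields $L(a^n)=O((\log|n|)^2)$, which still suffices for all your divergence estimates after adjusting exponents. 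The Rokhlin-tower step is a second loose end: it needs the nonsingular $\mathbb Z$-action on $(X,m)$ to admit towers of every height, which holds on standard spaces after splitting off the periodic part, but the theorem places no standardness hypothesis on $(X,m)$. The paper's proof sidesteps both issues because it never looks at how $a$ acts on $X$.
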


The theorem is trivially true if $\pi_{\alpha}$ contains a non zero invariant vector which is in $L^{2}(X,m)_{+}$. For example consider the representations obtained from an action $\alpha:\Gamma \curvearrowright (X,m)$ where $m$ is a finite $\Gamma$-invariant measure. The constant function $1_{X}$ is a non zero invariant vector which is in $L^{2}(X,m)_{+}$. Examples of representations without non zero invariant vectors are described in Subsection \ref{invariant}. 
\\
In the following, Lie groups are endowed with a length function associated to a left-invariant Riemannian metric.
According to A. Lubotzky, S. Mozes and M.S Raghunatan (see \cite{Lu}), any non-uniform irreducible lattice in a higher rank semisimple Lie group contains a cyclic subgroup of exponential growth.

\begin{coro}\label{coro1}
Let $G$ be a connected non-compact simple Lie groups with finite center. Let $H$ be a closed subgroup of $G$. Let $\lambda_{G/H}:G \rightarrow U(L^{2}(G/H))$ be the corresponding quasi-regular representation. Let $\Gamma$ be a non-uniform lattice in $G$. Then there exists $\xi$ in $L^{2}(G/H)_{+}$ such that for all $d\geq 1$ we have $$\sum_{\gamma \in \Gamma}\frac{\left\langle \lambda_{G/H}(\gamma)\xi,\xi\right\rangle^{2}}{(1+L(\gamma))^{d}}=\infty.$$ 
\end{coro}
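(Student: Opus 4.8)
The plan is to exhibit $\lambda_{G/H}|_{\Gamma}$ as the representation $\pi_{\alpha}$ coming from a measurable action of $\Gamma$ and then to apply Theorem \ref{maintheo}. First I would recall that the homogeneous space $G/H$ always carries a $G$-quasi-invariant Borel measure $m$: a genuine invariant measure exists only when the modular functions of $G$ and $H$ agree on $H$, but quasi-invariance is automatic and is all we need. Since $G$ is a connected Lie group with finite center it is second countable, so $G/H$ is a second countable locally compact space and $m$ is $\sigma$-finite. By definition the quasi-regular representation $\lambda_{G/H}$ is the unitary representation $\pi_{\alpha}$ attached to the action $\alpha\colon G\curvearrowright (G/H,m)$, unitarity being produced by the usual square-root Radon-Nikodym cocycle. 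Restricting $\alpha$ to the subgroup $\Gamma<G$ gives a measurable action of $\Gamma$ on $(G/H,m)$ for which $m$ remains quasi-invariant, and $\lambda_{G/H}|_{\Gamma}$ is exactly the representation $\pi_{\alpha}$ of $\Gamma$ on $L^{2}(G/H,m)$ in the sense of Theorem \ref{maintheo}.

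Next I would endow $\Gamma$ with the length function obtained by restricting the length function $L$ of $G$ (coming from the fixed left-invariant Riemannian metric) and verify the remaining hypothesis of Theorem \ref{maintheo}: that $\Gamma$ contains a cyclic subgroup of exponential growth for $L$. This is the heart of the matter. Since $\Gamma$ is a non-uniform lattice in the simple, hence semisimple, group $G$, it is non-cocompact and therefore contains a nontrivial unipotent element $u$, as every non-uniform lattice in a semisimple Lie group does. For such $u$ the Cartan (polar) decomposition shows that the Riemannian distance satisfies $d(e,u^{n})\asymp \log\max\bigl(\|u^{n}\|,\|u^{-n}\|\bigr)$, and since the matrix entries of $u^{\pm n}$ in a faithful (or adjoint) linear representation are polynomial in $n$ one obtains the logarithmic bound $L(u^{n})=d(e,u^{n})=\bigO{\log|n|}$ as $|n|\to\infty$. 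Consequently the ball $B_{L}(R)$ contains at least of order $e^{cR}$ powers of $u$ for some $c>0$, so $\langle u\rangle$ has exponential growth with respect to $L$. In the higher rank case this is precisely the statement of Lubotzky-Mozes-Raghunathan \cite{Lu} quoted above; the rank one case is covered by the same unipotent estimate.

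Finally, with every hypothesis in place, Theorem \ref{maintheo} applied to $\alpha\colon\Gamma\curvearrowright (G/H,m)$ delivers a vector $\xi\in L^{2}(G/H,m)_{+}$ with
$$\sum_{\gamma\in\Gamma}\frac{\left\langle \lambda_{G/H}(\gamma)\xi,\xi\right\rangle^{2}}{(1+L(\gamma))^{d}}=\infty$$
for every $d\ge 1$, which is the assertion of the corollary. I expect the only genuine obstacle to be the construction of the cyclic subgroup of exponential growth, that is, the passage from non-uniformity to a unipotent element together with the logarithmic length estimate; once this is secured the corollary is a direct specialization of Theorem \ref{maintheo}, the identification of $\lambda_{G/H}|_{\Gamma}$ with $\pi_{\alpha}$ being a routine unwinding of definitions.
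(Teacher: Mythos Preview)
Your proposal is correct and follows essentially the same route as the paper: verify that $\Gamma$ contains a cyclic subgroup of exponential growth with respect to the Riemannian length and then invoke Theorem~\ref{maintheo} for the action $\Gamma\curvearrowright (G/H,m)$. The paper's own proof is terser---it simply cites \cite{Lu} for the higher rank case and appeals to horospherical (unipotent) subgroups in rank one---whereas you spell out the identification $\lambda_{G/H}|_{\Gamma}=\pi_{\alpha}$ and the logarithmic length estimate for powers of a unipotent element; these additions are sound and make explicit what the paper leaves to the reader.
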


In Subsection \ref{qr'}, we assume that $G$ is locally compact second countable and can be written $G=KP$, where $K$ is a compact subgroup and $P$ is a closed subgroup which is not unimodular. Consider $\lambda_{G/P}:G \rightarrow U(L^{2}(G/P))$  the quasi-regular representation associated with $P$. 
The Harish-Chandra function 
\begin{equation}\label{defHch}
\Xi(g)=\left\langle \lambda_{G/P}(g)1_{G/P},1_{G/P}\right\rangle
\end{equation}
 is the diagonal coefficient of $ \lambda_{G/P}$ defined by the characteristic function  $1_{G/P}$ of the space $G/P$. Following Gangolli and Varadarajan, \cite[Definition 6.1.17]{GV}, we say that a function $f$ on the group $G$ equipped with a length function $L$, verifies the \emph{weak inequality} of Harish-Chandra if there exist $C>0$ and $d\geq 0$ such that 
\begin{equation} \label{inegfaible}
|f(g)|\leq C(1+L(g))^{d}\Xi(g).
\end{equation}
We prove:
\begin{coro}\label{coro2}
Let $G$ be a non-compact semisimple Lie group with finite center. Let $H$ be a closed subgroup of $G$. Let $\lambda_{G/H}:G\rightarrow U(L^{2}(G/H))$ be the corresponding quasi-regular representation. Then there exist $\xi$ and $\eta$ in $L^{2}(G/H)$ such that $g\mapsto \left\langle \lambda_{G/H}(g)\xi,\eta \right\rangle$ does not satisfy the \emph{weak inequality} of Harish-Chandra. 
\end{coro}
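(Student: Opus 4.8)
The plan is to produce a single diagonal coefficient whose decay is too slow to be dominated by $\Xi$ up to a polynomial factor, and then simply take $\eta=\xi$. First I would fix a non-uniform lattice $\Gamma$ in $G$ (these exist by the arithmetic constructions of Borel and Harish-Chandra). Since $\Gamma$ is not cocompact it contains nontrivial unipotent elements, and such a $u$ generates a cyclic subgroup of exponential growth with respect to the Riemannian length function $L$: this is the property recorded by Lubotzky--Mozes--Raghunathan in the higher rank case, and it also holds in rank one because $L(u^{n})$ grows only logarithmically in $n$. The restriction of $\lambda_{G/H}$ to $\Gamma$ is exactly the representation $\pi_{\alpha}$ attached to the measurable action $\alpha\colon\Gamma\curvearrowright(G/H,m)$ with its $\Gamma$-quasi-invariant measure, so Theorem \ref{maintheo} (equivalently Corollary \ref{coro1}) provides a positive vector $\xi\in L^{2}(G/H)_{+}$ with
$$\sum_{\gamma\in\Gamma}\frac{\left\langle\lambda_{G/H}(\gamma)\xi,\xi\right\rangle^{2}}{(1+L(\gamma))^{d}}=\infty\qquad\text{for every }d\geq 1.$$

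I would then argue by contradiction. Suppose the coefficient $\varphi(g)=\left\langle\lambda_{G/H}(g)\xi,\xi\right\rangle$ satisfied the weak inequality, so that $|\varphi(g)|\leq C(1+L(g))^{d_{0}}\Xi(g)$ for some $C>0$ and $d_{0}\geq 0$. Squaring this pointwise bound and summing over $\Gamma$ against the weight $(1+L)^{-d}$ gives
$$\sum_{\gamma\in\Gamma}\frac{\varphi(\gamma)^{2}}{(1+L(\gamma))^{d}}\leq C^{2}\sum_{\gamma\in\Gamma}\frac{\Xi(\gamma)^{2}}{(1+L(\gamma))^{\,d-2d_{0}}}.$$
The whole point is therefore to prove that the right-hand side is finite once $d$ is large enough, which contradicts the displayed divergence and forces $\varphi$ to violate the weak inequality.

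The main obstacle, and the only genuinely analytic step, is the estimate $\sum_{\gamma\in\Gamma}\Xi(\gamma)^{2}(1+L(\gamma))^{-s}<\infty$ for $s$ large. I would establish it in two moves. On the group side, Harish-Chandra's estimate gives $\Xi(g)\leq C'(1+L(g))^{N}e^{-\rho(H)}$, where $g=k_{1}\exp(H)k_{2}$ is a Cartan decomposition with $H$ in the closed positive chamber and $\rho$ the half-sum of the positive roots; combined with the integration formula $dg\asymp e^{2\rho(H)}\,dk_{1}\,dH\,dk_{2}$ and with $L(\exp H)\asymp|H|$, the exponential factors cancel and one finds $\int_{G}\Xi(g)^{2}(1+L(g))^{-s}\,dg\lesssim\int_{H\geq 0}(1+|H|)^{2N-s}\,dH$, which converges as soon as $s>2N+\dim\mathfrak a$. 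Then, to pass from this integral to the sum over $\Gamma$, I would use a local comparison: because $L$ is subadditive and $\Xi$ varies boundedly over a fixed relatively compact neighbourhood $U$ of the identity, one has $\Xi(\gamma)^{2}(1+L(\gamma))^{-s}\lesssim\frac{1}{m(U)}\int_{\gamma U}\Xi(g)^{2}(1+L(g))^{-s'}\,dg$ for a slightly smaller exponent $s'$, and summing over the translates $\gamma U$ bounds the series by the convergent integral.

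Choosing $d>2d_{0}+s$ then produces the contradiction, and setting $\eta=\xi$ completes the proof. The two delicate points to watch are the uniformity of Harish-Chandra's polynomial bound across the walls of the Weyl chamber, where the Jacobian degenerates (here one only needs the upper bound $e^{2\rho(H)}$ on the density, which holds everywhere), and the verification that the neighbourhoods $\gamma U$ can be arranged to be disjoint, which one obtains by shrinking $U$ using the discreteness of $\Gamma$.
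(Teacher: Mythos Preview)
Your argument is correct and follows the same line as the paper's: exhibit a positive vector $\xi$ whose diagonal coefficient has a divergent weighted $\ell^2$-sum over a discrete subgroup, then contradict the weak inequality via the finiteness of $\sum_\gamma\Xi(\gamma)^2/(1+L(\gamma))^s$, which you obtain (as does the paper, via Proposition~\ref{hcdiscret} and a citation to \cite{GV}) from the stability of $\Xi$ and Harish-Chandra's integral estimate. The one difference is that the paper avoids your detour through non-uniform lattices and Theorem~\ref{maintheo}: it simply picks a unipotent element $u\in G$ of infinite order, works directly with the cyclic group $\langle u\rangle$, and gets the divergent sum from Proposition~\ref{propositiv}; so the appeal to Borel--Harish-Chandra is unnecessary, since all that is needed is a single unipotent element in $G$.
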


\begin{remark}

Corollary \ref{coro2} holds true for $H:=P$ a minimal parabolic subgroup of a non-compact semisimple Lie group with finite center. Although it is known that a coefficient $g\mapsto \left\langle \lambda_{G/P}(g)1_{G/P},\eta \right\rangle$ with $\eta \in L^{2}(G/P)$ not in $L^{\infty}(G/P)$, does not satisfy the weak inequality (it is a consequence of Fatou's Theorem for semisimple Lie groups, see \cite[Theorem 5.1]{Sj}), Corollary \ref{coro2} gives a new proof of this result. More generally, Corollary \ref{coro2} implies that there exist matrix coefficients associated to any quasi-regular representations that do not satisfy the weak inequality of Harish-Chandra. Although the matrix coefficient $g\mapsto \left\langle \lambda_{G/P}(g)1_{G/P},\eta \right\rangle$ with $\eta \in L^{2}(G/P)$ not in $L^{\infty}(G/P)$ does not satisfy the weak inequality, we prove in Section 4 the following:
\begin{prop}\label{uncoef}
Let $G$ be a non-compact semisimple Lie group with finite center. Let $\Gamma$ be any discrete subgroup of $G$. Consider $P$ a minimal parabolic subgroup of $G$, and $\lambda_{G/P}:G \rightarrow U(L^{2}(G/P))$ the quasi-regular representation associated to $P$. Then there exist $C>0$ and $d\geq 1 $ such that for all $\eta \in L^{2}(G/P)$ we have $$\sum_{\gamma \in \Gamma}\frac{\left|\left\langle \lambda_{G/P}(\gamma)1_{G/P},\eta\right\rangle \right|^{2}}{(1+L(\gamma))^{d}} \leq C\| \eta \|^{2}$$ where $\| \cdot \|$ denotes the $L^{2}$-norm on $L^{2}(G/P)$. 
\end{prop}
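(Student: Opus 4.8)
The plan is to recast the left-hand side as the quadratic form of a positive integral operator on $L^{2}(G/P)$, to bound its operator norm by a Schur test, and to isolate the real content as a single uniform estimate on the Furstenberg boundary. Throughout I write $\lambda:=\lambda_{G/P}$ and realise $G/P\cong K/M$ with its $K$-invariant probability measure $m$, normalised so that $m(G/P)=1$. Three structural facts come first. Since $K$ preserves $m$, the vector $1_{G/P}$ is $K$-invariant; writing $\lambda$ through its Radon--Nikodym (Poisson) cocycle, the functions $\phi_\gamma:=\lambda(\gamma)1_{G/P}=c(\gamma^{-1},\cdot)^{1/2}$ are non-negative and satisfy $\int_{G/P}\phi_\gamma^{2}\,dm=\|1_{G/P}\|^{2}=1$ while, by \eqref{defHch}, $\int_{G/P}\phi_\gamma\,dm=\langle\lambda(\gamma)1_{G/P},1_{G/P}\rangle=\Xi(\gamma)$. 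Thus $\phi_\gamma^{2}$ is an $m$-probability density whose total mass is $1$ but whose $L^{1}$-average $\Xi(\gamma)$ is small and decaying.

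Next, setting $w(\gamma)=(1+L(\gamma))^{-d}$, I would observe the identity, valid for \emph{every} $\eta\in L^{2}(G/P)$,
\[
\sum_{\gamma\in\Gamma} w(\gamma)\,\bigl|\langle \lambda(\gamma)1_{G/P},\eta\rangle\bigr|^{2}=\langle T\eta,\eta\rangle,\qquad T=\sum_{\gamma\in\Gamma} w(\gamma)\,\langle\,\cdot\,,\phi_\gamma\rangle\,\phi_\gamma ,
\]
so that $T$ is the positive integral operator with non-negative, symmetric kernel $K(x,y)=\sum_\gamma w(\gamma)\phi_\gamma(x)\phi_\gamma(y)$. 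In particular the positivity of $\eta$ is not really needed: it suffices to bound $\|T\|$. Since $K\ge0$ is symmetric, Schur's test with weight $1$ gives $\|T\|\le\sup_{x}\int_{G/P}K(x,y)\,dm(y)=\sup_{x}S(x)$, where, using $\int\phi_\gamma\,dm=\Xi(\gamma)$,
\[
S(x):=\sum_{\gamma\in\Gamma}\frac{\Xi(\gamma)\,\phi_\gamma(x)}{(1+L(\gamma))^{d}} .
\]
The proposition thereby reduces to the key claim $\sup_{x\in G/P}S(x)<\infty$ for $d$ large enough.

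To establish this I would combine three ingredients. (i) The Harish-Chandra estimate (as in \cite{GV}), $\Xi(\gamma)\le C(1+L(\gamma))^{b}\,e^{-\rho(\mu(\gamma))}$, where $\mu(\gamma)$ is the Cartan projection and $L(\gamma)\asymp\|\mu(\gamma)\|$. (ii) The Poisson-kernel bound $\phi_\gamma(x)=e^{-\rho(H(\gamma^{-1}k_x))}\le C\,e^{\rho(\mu(\gamma))}$ together with the concentration forced by $\int\phi_\gamma^{2}\,dm=1$, i.e. $m\{\phi_\gamma^{2}\ge t\}\le 1/t$: the density is as large as $e^{2\rho(\mu(\gamma))}$ only on a set of $m$-measure at most $e^{-2\rho(\mu(\gamma))}$ around the direction where $\phi_\gamma$ concentrates. (iii) A volume/packing bound from discreteness: $|\Gamma\cap\Omega|\le C\,\mathrm{vol}(\Omega)$ for every bounded $\Omega\subset G$, together with the Cartan-coordinate volume $dg\asymp e^{2\rho(a^{+})}\,da^{+}\,dk_1\,dk_2$. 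The crux is that the volume exponent $2\rho$ matches exactly the peak exponent of $\phi_\gamma^{2}$. Organising $\Gamma$ into shells $\Gamma_R=\{L(\gamma)\in[R,R+1)\}$ and, within a shell, by the angular distance from $x$ to the concentration direction of $\gamma$, ingredient (iii) bounds by a constant the number of $\gamma\in\Gamma_R$ whose concentration cap contains $x$, and polynomially the number at each dyadic angular distance; feeding in (i) and (ii), the product $\Xi(\gamma)\phi_\gamma(x)$ is controlled by $e^{-\rho}\cdot e^{\rho}=1$ up to polynomial factors, giving $\sum_{\gamma\in\Gamma_R}\Xi(\gamma)\phi_\gamma(x)\le C\,\mathrm{poly}(R)$ uniformly in $x$. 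Then $S(x)\le C\sum_R R^{-d}\,\mathrm{poly}(R)$ converges for $d$ large.

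The main obstacle is precisely this uniform bound $\sup_x S(x)<\infty$: a priori a shell $\Gamma_R$ contains exponentially many elements in $R$, each with $\phi_\gamma(x)$ as large as $e^{\rho(\mu(\gamma))}$, so the naive bound diverges — consistent with the Remark, since $\langle\lambda(\gamma)1_{G/P},\eta\rangle$ is \emph{not} dominated pointwise by $\Xi$. What rescues the estimate is the exact matching of the volume growth of $G$ with the concentration of the boundary Poisson kernel, which spreads the peaks of the $\phi_\gamma$ so that only polynomially many can pile up over a fixed $x$; the sole hypothesis on $\Gamma$ that is used is its discreteness, through $|\Gamma\cap\Omega|\le C\,\mathrm{vol}(\Omega)$. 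In higher rank I would additionally keep track of Cartan projections approaching the walls of the Weyl chamber, where $\rho(\mu(\gamma))$ and $\|\mu(\gamma)\|$ cease to be comparable; this only contributes further polynomial factors and is absorbed by summing over the finitely many chamber faces.
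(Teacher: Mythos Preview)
Your approach is substantially different from the paper's and, as written, leaves a genuine gap at its core. The paper's argument is short: it quotes that $\lambda_{G/P}$ itself has property RD with respect to $L$ (so there exist $C,d$ with $\int_G \frac{|\langle\lambda_{G/P}(g)\xi,\eta\rangle|^2}{(1+L(g))^d}\,dg \le C\|\xi\|^2\|\eta\|^2$ for all $\xi,\eta$), specialises to $\xi=1_{G/P}$, and then passes from the integral over $G$ to the sum over $\Gamma$ via Lemma~\ref{l1}. That lemma applies because the coefficient $g\mapsto\langle\lambda_{G/P}(g)1_{G/P},\eta\rangle$ is \emph{stable} relative to $\Gamma$ (Lemma~\ref{stable'}); stability follows from the compactness of $G/P$ and the continuity and strict positivity of the cocycle $\kappa^{1/2}$, and this is exactly where the positivity of $\eta$ enters. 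You never invoke stability, and so miss the shortcut that reduces the whole proposition to a citation plus two elementary lemmas already established in the paper.

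Your reduction via Schur's test to the pointwise bound $\sup_{x\in G/P}\sum_{\gamma\in\Gamma}\frac{\Xi(\gamma)\phi_\gamma(x)}{(1+L(\gamma))^d}<\infty$ is valid as a reduction, but the argument you offer for this bound is only a heuristic. The assertion that within a shell $\Gamma_R$ ``the number of $\gamma$ whose concentration cap contains $x$ is bounded by a constant, and polynomially many at each dyadic angular distance'' is precisely the hard step; it is a shadow-lemma type statement that does not follow from the packing inequality $|\Gamma\cap\Omega|\le C\,\mathrm{vol}(\Omega)$ alone, since that inequality says nothing about \emph{where} on $G/P$ the attracting directions of $\Gamma_R$ land relative to a fixed $x$. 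The higher-rank remark that directions near the walls ``only contribute polynomial factors'' hides further substantial work. In effect you are trying to reprove from scratch, and uniformly in $x$, the boundary estimates that already underlie property RD for $\lambda_{G/P}$; the paper sidesteps all of this by using RD for $\lambda_{G/P}$ as a black box and the stability lemma to discretise.
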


\end{remark}

\subsection{Acknowledgements}
I would like to thank C. Pittet for very helpful discussions and criticisms. I would like also to thank the referees for their useful remarks and comments.


   \section{From square integrable representations to integrable representations}\label{sec2}

\subsection{Representations}\label{repre}
Let $G$ be a locally compact group. Let $\pi: G \rightarrow U(\mathcal{H})$ be a unitary representation on a Hilbert space. Consider $c$ as in the introduction.
Write $T=\sum_{i}\alpha_{i} \left\langle ~,\xi_{i}\right\rangle \eta_{i}$. We have $$c(g)T=\sum_{i}\alpha_{i} \left\langle ~,\pi(g)\xi_{i}\right\rangle \pi(g)\eta_{i}.$$ Observe that $\|c(g)T\|_{2}=\|\alpha\|_{l^{2}}=\|T\|_{2}$ where  $\|T\|_{2}$ denotes the norm of a Hilbert-Schmidt operator $T$ and $\|c(g)T\|_{1}=\|\alpha\|_{l^{1}}=\|T\|_{1}$ where $\|T\|_{1}$ denotes the norm of a trace class operator $T$. Hence, $c$ is a unitary representation on $\mathcal{L}^{2}(\mathcal{H})$ and it is an isometric Banach space representation on $\mathcal{L}^{1}(\mathcal{H})$.
Let $\overline{\pi}$ be the conjugate unitary representation on $\overline{\mathcal{H}}$ of $\pi$.
 Let $\sigma$ be the unitary representation: 
\begin{eqnarray*}
\sigma:G  &\rightarrow & U(\overline{\mathcal{H}} \otimes \mathcal{H}) \\
g &\mapsto& \overline{\pi}(g) \otimes \pi(g) .
\end{eqnarray*}

The Banach space isomorphism $\Phi$ defined as
\begin{eqnarray*}
\Phi: \overline{\mathcal{H}} \otimes \mathcal{H}  & \rightarrow & \mathcal{L}^{2}(\mathcal{H}) \\
\xi \otimes \eta &\mapsto& \left\langle ~,\xi \right\rangle \eta
\end{eqnarray*}
 intertwines $\sigma$ and the representation $c$: $$c\Phi =\Phi \sigma,$$ and this equivalence restricts to an equivalence between Banach space representations. For more details see  \cite[Chap. 2, \S~2.1, p.~12]{R}, \cite[Chap. 9.1, from \S~9.1.31 to 9.1.38]{D} and \cite[Chap. 1, \S~6, p. 96]{Pa} .

\begin{lemma}\label{commute}
Let $U,V$ be a pair of unit vectors in $\overline{\mathcal{H}} \widehat{\otimes} \mathcal{H} $ where $\widehat{\otimes}$ denotes the projective tensor product of Banach spaces. There exists a unique pair of unit vectors $S,T $ in $\mathcal{L}^{1}(\mathcal{H})$ such that $$\left\langle \sigma(g)U,V\right\rangle=\left\langle c(g)S,T\right\rangle.$$
\end{lemma}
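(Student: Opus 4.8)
The plan is to read off $S$ and $T$ directly from $U$ and $V$ by transporting them through the isomorphism $\Phi$. Concretely, I would set $S := \Phi(U)$ and $T := \Phi(V)$ and then verify the three things the statement demands: that $S$ and $T$ are unit vectors in $L^{1}(\mathcal{H})$, that this pair reproduces the matrix coefficient of $\sigma$, and that it is the only pair that does so. The existence part is essentially formal once one has in hand the two properties of $\Phi$ recorded just before the lemma.

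First I would check the norms. Since $\Phi$ restricts to an isometric Banach space isomorphism $\overline{\mathcal{H}}\widehat{\otimes}\mathcal{H}\to L^{1}(\mathcal{H})$ for $\|\cdot\|_{1}$, we get $\|S\|_{1}=\|U\|=1$ and $\|T\|_{1}=\|V\|=1$, so $S,T$ are unit vectors in $L^{1}(\mathcal{H})$. Next I would establish the coefficient identity, whose two key inputs are the intertwining relation $c\Phi=\Phi\sigma$ and the fact that $\Phi\colon\overline{\mathcal{H}}\otimes\mathcal{H}\to L^{2}(\mathcal{H})$ is an isomorphism of Hilbert spaces, hence unitary and so inner-product preserving. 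Using that $c(g)$ and $\sigma(g)$ on the trace-class resp. projective-tensor spaces are the restrictions of the corresponding operators on the ambient Hilbert spaces, I would compute, for every $g\in G$,
\[
\langle c(g)S,T\rangle=\langle c(g)\Phi(U),\Phi(V)\rangle=\langle \Phi(\sigma(g)U),\Phi(V)\rangle=\langle \sigma(g)U,V\rangle,
\]
where the middle equality is the intertwining relation applied to $U$ and the last equality is the unitarity of $\Phi$ on $\overline{\mathcal{H}}\otimes\mathcal{H}$. This yields existence.

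For uniqueness I would again use that $\Phi$ is a bijection: any pair of unit vectors $(S',T')$ in $L^{1}(\mathcal{H})$ equals $(\Phi(U'),\Phi(V'))$ for a unique pair of unit vectors $U',V'$ in $\overline{\mathcal{H}}\widehat{\otimes}\mathcal{H}$, and the same computation gives $\langle c(g)S',T'\rangle=\langle \sigma(g)U',V'\rangle$; imposing equality with $\langle\sigma(g)U,V\rangle$ for all $g$ then forces $U',V'$ back onto $U,V$. The hard part, and the only genuinely delicate point, is exactly this last implication: a matrix coefficient $g\mapsto\langle\sigma(g)U,V\rangle$ determines the pair only up to a common unimodular scalar, since replacing $(U,V)$ by $(e^{i\theta}U,e^{i\theta}V)$ leaves the coefficient unchanged (the phase in the second slot being conjugated). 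I would therefore read the uniqueness modulo this harmless common-phase ambiguity, which is irrelevant for the later use of the lemma, where only the absolute values $|\langle c(g)S,T\rangle|$ enter.
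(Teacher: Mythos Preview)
Your approach is essentially identical to the paper's: set $S=\Phi(U)$, $T=\Phi(V)$, use that $\Phi$ is an isometric Banach isomorphism onto $L^{1}(\mathcal{H})$ for the norms, and combine the intertwining $c\Phi=\Phi\sigma$ with the unitarity of $\Phi$ on the Hilbert level for the coefficient identity. Your treatment of uniqueness is in fact more scrupulous than the paper's; the paper simply reads ``unique'' as ``uniquely determined by $U,V$ through the bijection $\Phi$'' and does not attempt to recover $(S,T)$ from the coefficient function alone, so your phase-ambiguity caveat, while correct, goes beyond what is needed.
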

\begin{proof}
We can write $U=\Phi(S)$ and $V=\Phi(T)$ for a unique pair $S,T \in \mathcal{L}^{1}(\mathcal{H})$ of unit vectors because $\Phi$ is an isomorphim of Banach spaces. Furthermore, because $\Phi$ is an isomorphism of Hilbert spaces $\overline{\mathcal{H}}\otimes \mathcal{H} \supset \overline{\mathcal{H}}\widehat{\otimes}\mathcal{H}$ and $\mathcal{L}^{2}(\mathcal{H})\supset  \mathcal{L}^{1}(\mathcal{H})$ we have 
\begin{eqnarray*}
\left\langle \sigma(g)U,V\right\rangle&=& \left\langle \sigma(g) \Phi(S),\Phi(T) \right\rangle \\
                                      &=& \left\langle \Phi^{-1}\sigma(g) \Phi(S),T\right\rangle\\
						                          &=&  \left\langle c(g) S,T \right\rangle .   
																			\end{eqnarray*}

\end{proof}

\subsection{Proof of Proposition \ref{main}}
\subsubsection{An application of the Banach-Steinhaus theorem}
The next proposition is an application of the Banach-Steinhaus theorem. It enables us to prove Implication $(4) \Rightarrow (1)$ from Proposition \ref{main} in the next subsection.
\begin{prop}\label{multi}
Let $B: X_{1}\times X_{2} \times ... \times X_{r} \rightarrow \mathbb{C}$ be a multilinear map on a product of Banach spaces. If $B$ is continuous on each variable, then $B$ is continuous.
\end{prop}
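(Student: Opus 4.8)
The plan is to prove the equivalent statement that a separately continuous multilinear map is \emph{bounded}, i.e.\ that there is a constant $M$ with $|B(x_{1},\dots,x_{r})|\leq M\|x_{1}\|\cdots\|x_{r}\|$ for every tuple, and then to deduce joint continuity from this boundedness. The boundedness is obtained by induction on the number $r$ of variables, and the inductive step is precisely where the Banach--Steinhaus theorem enters.

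For the base case $r=1$, a separately continuous map is just a continuous linear functional, hence bounded by definition. For the inductive step, I would assume that every separately continuous $(r-1)$-linear map on a product of Banach spaces is bounded. Given a separately continuous $r$-linear map $B$, I first fix the last variable $x_{r}$: the map $(x_{1},\dots,x_{r-1})\mapsto B(x_{1},\dots,x_{r-1},x_{r})$ is separately continuous and $(r-1)$-linear, hence bounded by the induction hypothesis. This yields a constant $C(x_{r})$ with $|B(x_{1},\dots,x_{r-1},x_{r})|\leq C(x_{r})\|x_{1}\|\cdots\|x_{r-1}\|$.

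Next I would invoke Banach--Steinhaus. For each tuple $\bar{x}=(x_{1},\dots,x_{r-1})$ lying in the product of the closed unit balls of $X_{1},\dots,X_{r-1}$, continuity in the last variable gives a bounded linear functional $\phi_{\bar{x}}\colon X_{r}\to\mathbb{C}$, $\phi_{\bar{x}}(x_{r})=B(x_{1},\dots,x_{r-1},x_{r})$. For each fixed $x_{r}$ the inequality above gives $|\phi_{\bar{x}}(x_{r})|\leq C(x_{r})$ uniformly over all such $\bar{x}$, since $\|x_{1}\|\cdots\|x_{r-1}\|\leq 1$; thus the family $\{\phi_{\bar{x}}\}$ is pointwise bounded on the Banach space $X_{r}$. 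Banach--Steinhaus then produces a uniform bound $\sup_{\bar{x}}\|\phi_{\bar{x}}\|=M<\infty$, which says that $|B(x_{1},\dots,x_{r})|\leq M\|x_{r}\|$ whenever $\|x_{i}\|\leq 1$ for $i<r$, and multilinear homogeneity upgrades this to $|B(x_{1},\dots,x_{r})|\leq M\|x_{1}\|\cdots\|x_{r}\|$ for all tuples, completing the induction.

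Finally I would pass from boundedness to joint continuity by the standard telescoping argument: at a point $(a_{1},\dots,a_{r})$ I write $B(x_{1},\dots,x_{r})-B(a_{1},\dots,a_{r})$ as the sum over $k$ of the terms $B(x_{1},\dots,x_{k-1},x_{k}-a_{k},a_{k+1},\dots,a_{r})$, each controlled by $M\|x_{k}-a_{k}\|$ times a product of norms that stays bounded as $x_{i}\to a_{i}$, so the whole difference tends to $0$. The only real subtlety, and the step deserving care, is the bookkeeping in the Banach--Steinhaus application: one must index the functionals over the \emph{entire} product of unit balls and verify that pointwise boundedness genuinely holds there. The telescoping estimate and the base case are routine.
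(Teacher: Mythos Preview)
Your proof is correct and follows essentially the same approach as the paper: induction on $r$ with the Banach--Steinhaus theorem driving the inductive step. The paper's own proof is in fact just a two-line sketch (``By induction on $r$. See \cite[p.~81, Corollary]{RS} for the case $r=2$''), so you have filled in precisely the details that the paper leaves to the reader and to the cited reference.
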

\begin{proof}
By induction on $r$. 
See \cite[p.~81, Corollary]{RS} for the case $r=2$.
\end{proof}
If $G$ is a locally compact second countable group, we denote by $d\mu(g)$ a Borel measure on $G$ which is finite on compact subsets of $G$.

\begin{prop}\label{enfin}
Let $G$ be a locally compact second countable group. Let $\pi:G \rightarrow U(\mathcal{H})$ be a unitary representation on a Hilbert space and let $\sigma=\overline{\pi} \otimes \pi$. If for all $\xi,\eta \in \mathcal{H}$ we have $\int_{G}\left|\left\langle \pi(g)\xi,\eta \right\rangle\right|^{2}d\mu(g)<\infty$, then there exists $C>0$ such that for all unit vectors $\xi,\xi',\eta,\eta' \in \mathcal{H}$ we have $\int_{G}\left|\left\langle \sigma(g)\xi \otimes \eta, \xi'\otimes \eta' \right\rangle\right|d\mu(g)\leq C .$

\end{prop}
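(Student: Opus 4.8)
The plan is to set up a multilinear map whose separate continuity follows from the hypothesis, and then invoke the Banach–Steinhaus argument packaged in Proposition \ref{multi}.

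First I would observe that the hypothesis says exactly that for each fixed pair $\xi,\eta \in \mathcal{H}$, the matrix coefficient $g \mapsto \langle \pi(g)\xi,\eta\rangle$ lies in $L^{2}(G,d\mu)$. I would then define, for a quadruple $(\xi,\eta,\xi',\eta')\in \mathcal{H}^{4}$, the integral
\[
B(\xi,\eta,\xi',\eta') = \int_{G}\langle \pi(g)\xi,\xi'\rangle\,\overline{\langle\pi(g)\eta,\eta'\rangle}\,d\mu(g).
\]
The point of this definition is twofold. On one hand, by the definition of $\sigma = \overline{\pi}\otimes\pi$ and the scalar product on $\overline{\mathcal{H}}\otimes\mathcal{H}$, the integrand is exactly $\langle \sigma(g)\,\xi\otimes\eta,\ \xi'\otimes\eta'\rangle$ (up to checking the conjugation conventions, using that $\langle\cdot,\cdot\rangle_{\overline{\mathcal{H}}}=\overline{\langle\cdot,\cdot\rangle}$). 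On the other hand, the Cauchy–Schwarz inequality in $L^{2}(G,d\mu)$ shows that the integral defining $B$ converges absolutely whenever the two coefficients $g\mapsto\langle\pi(g)\xi,\xi'\rangle$ and $g\mapsto\langle\pi(g)\eta,\eta'\rangle$ are both in $L^{2}(G,d\mu)$, which the hypothesis guarantees. So $B$ is a well-defined map $\mathcal{H}^{4}\to\mathbb{C}$.

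Next I would check that $B$ is multilinear and continuous in each variable separately. Multilinearity is immediate from bilinearity of each coefficient in its two vector arguments and linearity of the integral (being careful that the map is conjugate-linear in $\xi',\eta'$, which is fine since Proposition \ref{multi} is stated for multilinear maps over $\mathbb{C}$ and conjugate-linearity in a variable is handled by working over $\overline{\mathcal{H}}$ in those slots). For separate continuity, fix three of the four vectors; say I fix $\eta,\xi',\eta'$ and vary $\xi$. Then $\eta,\xi',\eta'$ determine a fixed element $h:=g\mapsto\overline{\langle\pi(g)\eta,\eta'\rangle}$ of $L^{2}(G,d\mu)$, and $\xi\mapsto B$ is the composition of the bounded-into-$L^{2}$ map $\xi\mapsto\langle\pi(\cdot)\xi,\xi'\rangle$ with the fixed functional $\int_{G}(\cdot)\,h\,d\mu$; I would argue its boundedness again by Cauchy–Schwarz, so that the partial map is a bounded (hence continuous) linear functional. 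The same works in each of the four slots by symmetry of the construction.

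Having verified separate continuity, Proposition \ref{multi} immediately yields that $B$ is jointly continuous on $\mathcal{H}^{4}$, i.e. there is a constant $C>0$ with $|B(\xi,\eta,\xi',\eta')|\le C\|\xi\|\,\|\eta\|\,\|\xi'\|\,\|\eta'\|$. Specializing to unit vectors and replacing the integrand by its absolute value is the last step: since $|\langle\sigma(g)\xi\otimes\eta,\xi'\otimes\eta'\rangle|$ need not equal the integrand of $B$ in modulus-preserving form, I would instead run the whole argument with the moduli from the start — that is, the bound I actually want controls $\int_{G}|\langle\sigma(g)\xi\otimes\eta,\xi'\otimes\eta'\rangle|\,d\mu(g)$, which is itself $\le\big(\int|\langle\pi(g)\xi,\xi'\rangle|^{2}\big)^{1/2}\big(\int|\langle\pi(g)\eta,\eta'\rangle|^{2}\big)^{1/2}$ by Cauchy–Schwarz, so the quantity to be bounded is a product of two coefficient-norms, each a separately continuous (in fact separately bounded) expression in its pair of vectors.

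I expect the main obstacle to be precisely this last mismatch: Banach–Steinhaus gives uniform control of the \emph{integral of a product}, but the statement asks for the \emph{integral of the modulus} of a single coefficient of $\sigma$. The clean way around it, which I would adopt, is to define the sublinear/multilinear object directly on the $L^{2}$-norms of coefficients — for instance to let $B$ output $\|g\mapsto\langle\pi(g)\xi,\xi'\rangle\|_{L^{2}(\mu)}$ as a function of $(\xi,\xi')$, establish its separate boundedness from the hypothesis, upgrade it to a uniform bound, and then combine two such bounds via Cauchy–Schwarz to control $\int_{G}|\langle\sigma(g)\xi\otimes\eta,\xi'\otimes\eta'\rangle|\,d\mu$. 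Care with the conjugate-space conventions of Subsection \ref{spaces} and with the absolute-value versus bilinear-form distinction is where the real work lies; the functional-analytic engine itself is entirely supplied by Proposition \ref{multi}.
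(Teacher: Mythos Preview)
Your overall strategy---set up a $4$-linear form and invoke Proposition~\ref{multi}---matches the paper's. But there is a real gap in your separate-continuity step. You write that ``$\xi\mapsto B$ is the composition of the bounded-into-$L^{2}$ map $\xi\mapsto\langle\pi(\cdot)\xi,\xi'\rangle$ with the fixed functional $\int_{G}(\cdot)\,h\,d\mu$,'' and justify boundedness ``by Cauchy--Schwarz.'' Cauchy--Schwarz does not give this: the hypothesis only tells you that the linear map $\xi\mapsto\langle\pi(\cdot)\xi,\xi'\rangle$ lands in $L^{2}(G,\mu)$, not that it is \emph{continuous} from $\mathcal{H}$ to $L^{2}(G,\mu)$. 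That continuity is exactly what must be proved before Proposition~\ref{multi} can even be invoked, and your argument assumes it.

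The paper handles this by an explicit Banach--Steinhaus argument \emph{before} appealing to Proposition~\ref{multi}. It fixes $\eta,\xi',\eta'$, chooses a compact exhaustion $G=\bigcup K_{n}$, and defines genuinely bounded operators $T_{n}:\mathcal{H}\to L^{1}(G,\mu)$ by $\xi\mapsto 1_{K_{n}}(\cdot)\langle\sigma(\cdot)\xi\otimes\eta,\xi'\otimes\eta'\rangle$; each $T_{n}$ is bounded because $\mu(K_{n})<\infty$ and matrix coefficients are pointwise bounded by $\|\xi\|\|\eta\|\|\xi'\|\|\eta'\|$. The hypothesis plus Cauchy--Schwarz give $\sup_{n}\|T_{n}\xi\|_{L^{1}}<\infty$ for each $\xi$, and Banach--Steinhaus then yields $\sup_{n}\|T_{n}\|<\infty$, i.e.\ separate continuity in $\xi$ (with the absolute value already inside the integral). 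Only after this is Proposition~\ref{multi} applied. An alternative repair of your argument would be to invoke the closed graph theorem for $\xi\mapsto\langle\pi(\cdot)\xi,\xi'\rangle\in L^{2}(G,\mu)$, but you need to say so explicitly.

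As for the absolute-value issue you flagged: it is less serious than you fear. Once $B$ is jointly continuous, note that $\int_{G}|\langle\pi(g)\xi,\xi'\rangle|^{2}\,d\mu(g)=B(\xi,\xi,\xi',\xi')$, so the diagonal values of $B$ already control the $L^{2}$-norms of the coefficients uniformly; a single Cauchy--Schwarz then bounds $\int_{G}|\langle\sigma(g)\xi\otimes\eta,\xi'\otimes\eta'\rangle|\,d\mu(g)$ as required.
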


\begin{proof}
Let $G=\cup K_{n}$, where $K_{n}$ is an exhausting sequence of compact subsets of $G$.
Fix $\eta,\xi',\eta' \in \mathcal{H}$, and let us define the family $T_{n}$ of continuous linear operators:
\begin{align*}
T_{n}:\mathcal{H} &\rightarrow L^{1}(G,\mu)\\
\xi &\longmapsto \big( g \mapsto 1_{K_{n}}(g)\left\langle \sigma(g)  \xi \otimes \eta, \xi' \otimes \eta' \right\rangle \big) 
\end{align*}
 with $1_{K_{n}}$ the characteristic function of $K_{n}$. 
	
We have for each $\xi \in \mathcal{H}$:
\begin{align*}
\sup_{n} \|T_{n}(\xi)\|_{L^{1}}&=\sup_{n}\int_{K_{n}}\left|\left\langle \sigma(g)  \xi \otimes \eta, \xi' \otimes \eta' \right\rangle\right| d\mu(g) \\
&=\sup_{n}\int_{K_{n}}\left|\left\langle \pi(g)  \xi, \xi' \right\rangle\right|\left|\left\langle \pi(g)  \eta, \eta' \right\rangle\right| d\mu(g)  \\
&\leq \sup_{n} \left\{ \left(\int_{K_{n}}\left|\left\langle \pi(g)  \xi, \xi' \right\rangle\right|^{2} d\mu(g) \right)^{\frac{1}{2}} \left(\int_{K_{n}}\left|\left\langle \pi(g)  \eta, \eta' \right\rangle\right|^{2} d\mu(g) \right)^{\frac{1}{2}} \right\}\\
& \leq \left(\int_{G}\left|\left\langle \pi(g)  \xi, \xi' \right\rangle\right|^{2} d\mu(g) \right)^{\frac{1}{2}} \left(\int_{G}\left|\left\langle \pi(g)  \eta, \eta' \right\rangle\right|^{2} d\mu(g) \right)^{\frac{1}{2}}\\
 &<\infty,
\end{align*}
by hypothesis. The Banach-Steinhaus theorem implies that $\sup_{n}\|T_{n}\|<\infty$ where $\|\cdot\|$ denotes the operator norm. Hence   
$$\sup_{ \left\{\xi,\|\xi\|=1 \right\}} \int_{G}\left|\left\langle \sigma(g)  \xi \otimes \eta, \xi' \otimes \eta' \right\rangle\right| d\mu(g)<\infty.$$
	
	This proves that the multilinear form
	$B:$
	\begin{align*}
	\overline{\mathcal{H}}\times \mathcal{H} \times \mathcal{H} \times \overline{\mathcal{H}} &\rightarrow \mathbb{C} \\
(\xi,\eta,\xi',\eta') &\mapsto \int_{G}\left\langle \sigma(g)  \xi \otimes \eta, \xi' \otimes \eta' \right\rangle d\mu(g)
\end{align*}
is continuous in $\xi$. Analogous arguments show that $B$ is continuous on $\eta,\xi'$ and $\eta'$. Proposition \ref{multi} completes the proof.

\end{proof}

\subsubsection{Proof of Proposition \ref{main}}
\begin{proof}
$(1) \Rightarrow (3)$ and $(2) \Rightarrow (4)$  are obvious.
\\
$(1)\Rightarrow (2)$ and $(3) \Rightarrow (4)$ are clear:
take $U=\xi \otimes \xi$ and $V=\eta \otimes \eta$. Then 
$$\left\langle \sigma(g)U,V\right\rangle= \left|\left\langle \pi(g)\xi ,\eta \right \rangle\right|^{2}.$$
Lemma $\ref{commute}$ and integration complete the proof.\\
Let us prove
$(4) \Rightarrow (1):$ 

Take $U,V$ two unit vectors (for the projective norm) in $\overline{\mathcal{H}}\widehat{\otimes}\mathcal{H}$. Write $U=\sum_{k}\alpha_{k} \xi_{k} \otimes \eta_{k}$ and $V=\sum_{l}\beta_{l} \xi'_{l} \otimes \eta'_{l}$ with $\|\alpha\|_{l^{1}}=1=\|\beta\|_{l^{1}}$, where $(\xi_{k})_{k\in\mathbb{N}},(\xi'_{k})_{k\in\mathbb{N}}$ and  $(\eta_{k})_{k\in\mathbb{N}},(\eta'_{k})_{k\in\mathbb{N}}$ are orthonormal families in $\overline{\mathcal{H}}$ and $\mathcal{H}$. We have :

\begin{align*}
\int_{G}\left|\left\langle \sigma(g)U,V\right\rangle\right|d\mu(g)&= \int_{G}\left|\left\langle \sigma(g) \sum_{k}\alpha_{k} \xi_{k} \otimes \eta_{k},\sum_{l}\beta_{l} \xi'_{l} \otimes \eta'_{l}  \right\rangle\right|d\mu(g) \\
& \leq \sum_{k,l}\left|\alpha_{k}\right|\left|\beta_{l}\right|\int_{G}\left|\left\langle \sigma(g)\xi_{k}\otimes \eta_{k} ,\xi'_{l}\otimes \eta'_{l}\right\rangle\right|d\mu(g).
\end{align*}

Thanks to Proposition \ref{enfin}, there exists $C>0$ such that for all $k,l\in\mathbb{N}$ $$\int_{G}\left|\left\langle \sigma(g)\xi_{k}\otimes \eta_{k} ,\xi'_{l}\otimes \eta'_{l}\right\rangle\right|d\mu(g)\leq C.$$
Hence,
\begin{align*}
\int_{G}\left|\left\langle \sigma(g)U,V\right\rangle\right|d\mu(g) & \leq \sum_{k,l}\left|\alpha_{k}\right|\left|\beta_{l}\right|C\\
&=C.
\end{align*}
The proof of $(2) \Rightarrow (3)$ is similar and left to the reader.
 \end{proof}

\section{Length functions, quasi-regular representations}
		\subsection{Property RD}
	We shall define property RD for a unitary representation of a locally compact group $G$.
	\begin{defi}\label{def1}
	A length function $L:G \rightarrow \mathbb{R_{+}}$ on a locally compact group is a measurable function which is locally bounded ( i.e. for any compact $K\subset G$, we have $\sup\left\{L(g),g\in K\right\}<\infty$) satisfying

	\begin{enumerate}
	\item $L(e)=0$,
    \item $L(g^{-1})=L(g)$,
	\item  $L(gh)\leq L(g)+L(h).$
	\end{enumerate}
	
	\end{defi}
	\begin{remark}
	Assume that $G\curvearrowright(X,d)$ acts properly by isometries on a metric space. Fix a point $x\in X$, then $g\mapsto L(g):=d(g\cdot x,x)$ defines a length function on $G$. 
	\end{remark}
	
	We will need the following:
	\begin{lemma}\label{fonctionlong}
	Let $G$ be a locally compact group endowed with a length function $L$. Let $O$ be a compact subset in $G$. Then there exists a positive constant $C$ such that $$1+L(gx)\geq C(1+L(g))$$ for all $g$ in $G$ and for all $x$ in $O$.
	\end{lemma}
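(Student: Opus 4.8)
The plan is to reduce the statement to the two structural facts available about $L$: its subadditivity (property $(3)$ of Definition \ref{def1}) and the local boundedness built into the definition of a length function. The argument is short, so I will lay out the three moves and then note the only point that requires care.

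First I would exploit compactness of $O$. Since $L$ is locally bounded, the quantity $M := \sup\{L(x) : x \in O\}$ is finite. This is the step where the hypothesis that $O$ is compact is used, together with local boundedness; note that we do \emph{not} need continuity of $L$, only the locally-bounded axiom, which is important since a length function need not be continuous.

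Next I would apply the triangle inequality. For every $g \in G$ and every $x \in O$, property $(3)$ gives
$$
L(gx) \leq L(g) + L(x) \leq L(g) + M,
$$
and hence $1 + L(gx) \leq 1 + L(g) + M$. It then remains to absorb the additive constant $M$ into a multiplicative one. Here I would use the trivial but essential bound $1 + L(g) \geq 1$, which yields $M \leq M\,(1 + L(g))$, so that
$$
1 + L(gx) \leq (1 + L(g)) + M\,(1 + L(g)) = (1 + M)\,(1 + L(g)).
$$
Setting $C := 1 + M$ gives the claimed inequality uniformly in $g \in G$ and $x \in O$.

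There is no genuine obstacle here: the whole content is a direct manipulation of the length-function axioms. The only subtlety worth flagging is in the first step, where one must invoke local boundedness (rather than any continuity or growth assumption) to guarantee $M < \infty$; once that is secured, the passage from an additive to a multiplicative constant is immediate because $1 + L(g)$ is bounded below by $1$.
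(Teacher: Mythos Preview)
Your proof is correct and follows essentially the same approach as the paper: the paper combines subadditivity and local boundedness exactly as you do, just packaging the additive-to-multiplicative step via the single inequality $1+L(gx)\leq (1+L(g))(1+L(x))$ before bounding $L(x)$ by $M$.
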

	\begin{proof}
	Triangle inequality and the equality $L(x)=L(x^{-1})$ imply that for all $g,x\in G$ $$1+L(g)\leq \left(1+L(gx)\right)\left(1+L(x)\right).$$ Since $L$ is locally bounded, we have for all $x$ in $O$ 
	$$(1+L(g))\leq \left(1+L(gx)\right)\left(1+M\right),$$ where $M=\sup\left\{L(x),x\in O\right\}.$ 
	\end{proof}
	
		\begin{defi}
	A unitary representation $\pi: G \rightarrow U(\mathcal{H})$ of a locally compact group has property RD with respect to $L$ if there exist $C>0$ and $d\geq 1$ such that for each pair of unit vectors $\xi,\eta \in \mathcal{H}$ we have $$\int_{G}\frac{\left|\left\langle \pi(g)\xi,\eta\right\rangle\right|^{2}}{(1+L(g))^{d}}dg\leq C.$$ 
	\end{defi}

	\begin{remark}\label{rm1}
	Assume $K$ is a symmetric relatively compact generating set of $G$
containing the identity. Then $L(g) = \min \left\{ n\in  \mathbb{N}: \exists k_{1},...,k_{n}~~ such~~that~~ g=k_{1}...k_{n} \right\}$ is a length function on G. If $L'$ is any length function
on $G$, then $L'(g) \leq C L(g),$ where $C= \sup\left\{ L(k),k\in K \right\}$. Hence if $\pi$ has rapid decay with respect to $L'$ then
it has rapid decay with respect to L.
\end{remark}
\subsection{Representations associated to measurable actions}\label{sec3}
Let $G$ be a locally compact group acting measurably on a measured space $ \alpha: G\curvearrowright (X,m)$ where $m$ is a $G$-quasi-invariant measure. The action is measurable in the sense that: the map $$\alpha:(g,x) \in G\times X \mapsto \alpha(g,x)=g\cdot x \in X ,$$ is measurable. For all $g\in G$ we denote by $g_{*}m$ the measure which verifies for all Borel subsets $A\subset X$ $$g_{*}m(A)=m(g^{-1}A).$$ We say that $m$ is $G$-quasi-invariant if for all $g\in G$, $m$ and $g_{*}m$ are in the same measure class. We say that $m$ is $G$-invariant if for all $g\in G$, $g_{*}m=m$. We denote by $$x\in X \mapsto \frac{dg_{*}m}{dm}(x)$$ the Radon-Nikodym derivative of the measure $g_{*}m$ with respect to $m$. It verifies $$\int_{X}f(g\cdot x)\frac{dg^{-1}_{*}m}{dm}(x) dm(x)=\int_{X}f(x)dm(x).$$ In this situation, consider the Hilbert space $L^{2}(X,m)$, and define the unitary representation $\pi_{\alpha}:G\rightarrow U(L^{2}(X,m))$ as 
\begin{equation}\label{defiqr}
\pi_{\alpha}(g)\xi(x)=\left(\frac{dg_{*}m}{dm}(x)\right)^{\frac{1}{2}}\xi(g^{-1}\cdot x).
\end{equation}

In other words, $$ \left(\frac{dg_{*}m}{dm}(x)\right)^{\frac{1}{2}}dm(x)$$ can be seen as a ``half density'' on $X$.
\subsection{Quasi-regular representations}\label{qr}
Quasi-regular representations associated to a pair $(G,H)$ where $H$ is a closed subgroup of $G$ provide examples of unitary representations associated to a measurable action. But first of all, we recall what is the measure class we consider on $G/H.$ 
\subsubsection{A measure class on $G/H$}

Let $G$ be a locally compact group with a Haar measure $dg$, and let $H$ be a closed subgroup of $G$ with a Haar measure $dh$. The space $G/H$ is endowed with its quotient topology. Consider its Borel $\sigma$-algebra. We shall define a Borel measure on $G/H$. 
Let $\Delta_{G}$ and $\Delta_{H}$ be the modular functions of $G$ and $H$. A \textit{rho-function} for the pair $(G,H)$ is a continuous map $\rho:G \rightarrow \mathbb{R}^{*}_{+}$ satisfying for all $g\in G$ and for all $h\in H$ $$ \rho(gh)=\frac{\Delta_{H}(h)}{\Delta_{G}(h)}\rho(g).$$
It always exists, see \cite[(2.54)]{Fo} or \cite[Chapter 8, Section 1]{Re}. Therefore, given a rho-function for the pair $(G,H)$, there exists a quasi-invariant regular Borel measure on $G/H$ denoted by $\nu$ such that for all $f\in C_{c}(G),$ $$\int_{G}f(g)\rho(g)dg=\int_{G/H}\int_{H}f(gh)dhd\nu(gH),$$ with  Radon-Nikodym derivative $$\frac{dg^{-1}_{*}\nu}{d\nu}(xH)=\frac{\rho(gx)}{\rho(x)},$$ for all $g,x\in G$. See \cite[Appendix B]{Be}. 
The quasi-regular representation associated to a pair to $(G,H)$ is the unitary representation 
$\lambda_{G/H}:G \rightarrow U(L^{2}(G/H,\nu))$ defined as 
 $$\left(\lambda_{G/H}(g)\xi\right)(xH)=\left(\frac{\rho(g^{-1}x)}{\rho(x)}\right)^{\frac{1}{2}}\xi(g^{-1}xH)$$

 for all $\xi\in L^{2}(G/H,\nu)$, for all $g\in G$ and for all $xH \in G/H$.

\subsubsection{A particular class of quasi-regular representations}\label{qr'}
	
Let $G$ be a locally compact group which is unimodular, and assume that there exists a compact subgroup $K$ and a closed subgroup $P$ of $G$ such that $$G = KP.$$We shall define a rho-function for the pair $(G,K)$. We denote by $\Delta_{P}$ the right-modular function of $P$. We extend to $G$ the map $\Delta_{P}$ of $P$ as $\Delta : G \rightarrow \mathbb{R}^{*}_{+}$ with $\Delta(g)=\Delta(kp):=\Delta_{P}(p)$. It is well defined because $K\cap P$ is compact (observe that $\Delta_{P}|_{K\cap P}=1$). Notice that for all $g\in G$ and for all $p\in P$, $\Delta(gp)=\Delta(g)\Delta(p)=\Delta(g)\Delta_{P}(p)$. Hence the function
 $$x \in G \mapsto \Delta(x) \in \mathbb{R^{*}_{+}},$$ defines a $\rho$ function. Observe also that
 $$x \in G/P \mapsto  \frac{\Delta(gx)}{\Delta(x)} \in \mathbb{R^{*}_{+}}$$ is well defined. The quotient $G/P$ carries a unique $G$-quasi-invariant measure $\nu$, such that the Radon-Nikodym derivative at $(g,x)\in G \times G/P$ denoted by  $\kappa(g,x)=\frac{dg_{*}\nu}{d\nu}(x)$  satisfies $$\frac{dg^{-1}_{*}\nu}{d\nu}(x)=\frac{\Delta(gx)}{\Delta(x)}$$ for all $g\in G$ and $x\in G/P$.  Consider the quasi-regular representation  $\lambda_{G/P}:G \rightarrow U(L^{2}(G/P))$ associated to $P$, defined as $$(\lambda_{G/P}(g)\xi)(x)=\kappa(g,x)^{\frac{1}{2}}\xi(g^{-1}\cdot x).$$ We denote by $1_{G/P}$ the characteristic function of $G/P$. 

\begin{defi}
The \emph{Harish-Chandra function} $\Xi :G \rightarrow (0,\infty)$ is defined as 
$$\Xi(g):=\left\langle\lambda_{G /P}(g)1_{G/P},1_{G/P} \right\rangle.$$
\end{defi} 
As $\lambda_{G/P}$ is a unitary representation, we have $\Xi(g)=\Xi(g^{-1})$ for all $g\in G$. Observe also that for all $k,k'\in K$ we have $\Xi(kgk')=\Xi(g).$

\subsection{Stability of some matrix coefficients}

 Let $\Gamma$ be a discrete subgroup of a locally compact group $G$. Let $\pi: G \rightarrow U(\mathcal{H})$ be a unitary representation. A matrix coefficient, $g\mapsto \left\langle \pi(g) \xi,\eta \right\rangle$ is \emph{stable} on $G$ relative to $\Gamma$ if there exists a relatively compact neighborhood $V$ of the neutral element $e\in G$ and $C,c>0$ such that $$c\left|\left\langle \pi(\gamma) \xi,\eta \right\rangle\right|\leq  \left|\left\langle \pi(\gamma g) \xi,\eta \right\rangle\right| \leq C\left|\left\langle \pi(\gamma) \xi,\eta \right\rangle\right|$$ for all $g\in V$ and for all $\gamma \in \Gamma$. 

The interest of stable matrix coefficients is illustrated by the following proposition:
\begin{lemma}\label{l1}
Let $\Gamma$ be a discrete subgroup of a locally compact group $G$. Let $\pi: G \rightarrow U(\mathcal{H})$ be a unitary representation, and let $L$ be a length function on $G$. Let $\xi$ and $\eta$ be in $\mathcal{H}$. If $g\mapsto \left\langle \pi(g)\xi,\eta \right\rangle$ is a stable coefficient relative to $\Gamma$, then there exists a constant $C\geq 1 $ such that for all $d\geq 1$ we have:
$$\sum_{\Gamma} \frac{\left|\left\langle \pi(\gamma)\xi,\eta \right\rangle\right|^{2}}{(1+L(\gamma))^{d}}\leq C \int_{G}\frac{\left|\left\langle \pi(g)\xi,\eta \right\rangle\right|^{2}}{(1+L(g))^{d}}dg.$$

\end{lemma}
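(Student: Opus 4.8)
The plan is to compare the discrete sum over $\Gamma$ with the integral over $G$ by using the stability hypothesis to bound each term $\bigl|\langle \pi(\gamma)\xi,\eta\rangle\bigr|^2$ from above by a local average of the integrand over a neighborhood of $\gamma$. Let $V$ be the relatively compact neighborhood of $e$ and let $c,C>0$ be the constants supplied by stability, so that $c\bigl|\langle\pi(\gamma)\xi,\eta\rangle\bigr|\le \bigl|\langle\pi(\gamma g)\xi,\eta\rangle\bigr|$ for all $g\in V$ and $\gamma\in\Gamma$. Squaring gives $\bigl|\langle\pi(\gamma)\xi,\eta\rangle\bigr|^2 \le c^{-2}\bigl|\langle\pi(\gamma g)\xi,\eta\rangle\bigr|^2$ for every $g\in V$. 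First I would integrate this inequality over $V$ against the Haar measure $dg$, which yields
\[
\bigl|\langle\pi(\gamma)\xi,\eta\rangle\bigr|^2 \le \frac{1}{c^2\,|V|}\int_{\gamma V}\bigl|\langle\pi(h)\xi,\eta\rangle\bigr|^2\,dh,
\]
where $|V|=\int_V dg>0$ is finite since $V$ is relatively compact, and I have used the left-invariance of Haar measure to substitute $h=\gamma g$.

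Next I would insert the length-function weight. The point is that on the translate $\gamma V$ the weight $(1+L(h))^{-d}$ is comparable to $(1+L(\gamma))^{-d}$: Lemma \ref{fonctionlong}, applied with the compact set $O=\overline{V}$, gives a constant $C_0>0$ with $1+L(\gamma g)\le C_0(1+L(\gamma))$ for all $g\in V$, hence $(1+L(\gamma))^{-d}\le C_0^{\,d}(1+L(\gamma g))^{-d}$. Combining this with the integrated stability inequality, each summand is controlled by
\[
\frac{\bigl|\langle\pi(\gamma)\xi,\eta\rangle\bigr|^2}{(1+L(\gamma))^d}
\le \frac{C_0^{\,d}}{c^2\,|V|}\int_{\gamma V}\frac{\bigl|\langle\pi(h)\xi,\eta\rangle\bigr|^2}{(1+L(h))^d}\,dh.
\]

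Finally I would sum over $\gamma\in\Gamma$. Because $\Gamma$ is discrete in $G$, one can shrink $V$ (replacing it by a smaller relatively compact symmetric neighborhood, which only changes the constants) so that the translates $\{\gamma V\}_{\gamma\in\Gamma}$ have uniformly bounded overlap: there is an integer $N$ such that no point of $G$ lies in more than $N$ of the sets $\gamma V$. Then $\sum_{\gamma\in\Gamma}\int_{\gamma V}(\cdots)\,dh\le N\int_G(\cdots)\,dh$, and collecting the constants $C=NC_0^{\,d}c^{-2}|V|^{-1}$ gives the asserted inequality. The main obstacle is precisely establishing this bounded-overlap property from discreteness of $\Gamma$: one must check that $\gamma_1 V\cap\gamma_2 V\neq\varnothing$ forces $\gamma_1^{-1}\gamma_2\in VV^{-1}$, and that the relatively compact set $VV^{-1}$ meets the discrete set $\Gamma$ in only finitely many points, uniformly bounding the multiplicity $N$ independently of the base point. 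I would also need to ensure $V$ is chosen small enough that the constant $C$ can be taken independent of $d\ge 1$; since $C_0$ depends on $M=\sup\{L(x):x\in\overline{V}\}$ through $C_0=1+M$, shrinking $V$ so that $M\le 1$ forces $C_0\le 2$, and one may then absorb $C_0^{\,d}$ into a $d$-independent bound only if a uniform statement is desired — otherwise the constant $C$ in the lemma is permitted to depend on $d$, which the statement allows since it asserts existence of $C$ for each $d$.
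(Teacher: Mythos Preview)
Your argument is essentially the paper's: exploit the lower stability bound to compare each summand with an integral over $\gamma V$, control the weight via Lemma~\ref{fonctionlong}, and sum. Two remarks. First, the paper shortcuts your bounded-overlap discussion by shrinking $V$ so that the translates $\gamma V$ are pairwise \emph{disjoint} (if $\gamma V\cap\gamma'V\neq\varnothing$ then $\gamma^{-1}\gamma'\in VV^{-1}$, and one takes $V$ small enough that $VV^{-1}\cap\Gamma=\{e\}$); this gives $N=1$ directly. Second, you misread the quantifier order in the last sentence: the lemma asserts $\exists C\,\forall d$, not $\forall d\,\exists C$. Your constant $NC_0^{\,d}c^{-2}|V|^{-1}$ genuinely depends on $d$ through $(1+M)^d$, and shrinking $V$ so that $M\le 1$ does not cure this. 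That said, the paper's own constant $c'$ has exactly the same $d$-dependence (its last step also divides by $(1+M)^d$), so this is an imprecision in the statement rather than a defect particular to your proof; the applications in the paper only use the inequality for a fixed $d$.
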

\begin{proof}
Let $V$ be a relatively compact neighborhood of the neutral element of $G$, such that $\gamma \cdot V \cap \gamma' \cdot V=\emptyset $ for all $\gamma,\gamma' \in \Gamma$ such that $\gamma\neq\gamma'$.
Consider a matrix coefficient $g\mapsto\left\langle \pi(g)\xi,\eta \right\rangle$ which is stable relative to $\Gamma$. We have
\begin{eqnarray*}
\int_{G}\frac{\left|\left\langle \pi(g)\xi,\eta \right\rangle\right|^{2}}{(1+L(g))^{d}}dg &\geq &\sum_{\gamma} \int_{\gamma.V}\frac{\left|\left\langle \pi(g)\xi,\eta \right\rangle\right|^{2}}{(1+L(g))^{d}}dg      \\
                                                                                                &=&\sum_{\gamma} \int_{V}\frac{\left|\left\langle \pi(\gamma x )\xi,\eta \right\rangle\right|^{2}}{(1+L(\gamma x))^{d}}dx     \\ 
																																																&\geq& \sum_{\gamma} \int_{V}c^{2}\frac{\left|\left\langle \pi(\gamma)\xi,\eta \right\rangle\right|^{2}}{(1+L(\gamma x))^{d}}dx \\
																																																&\geq& c' \sum_{\gamma} \frac{\left|\left\langle \pi(\gamma)\xi,\eta \right\rangle\right|^{2}}{(1+L(\gamma))^{d}},
\end{eqnarray*} 
for some positive constant $c'$ depending on $V$ and on the constant of Lemma \ref{fonctionlong}.
\end{proof}

\begin{lemma}\label{stable'}
Let $\eta$ be in $L_{+}^{2}(G/P)$.
The matrix coefficient $g\mapsto \left\langle \lambda_{G/P}(g)1_{G/P},\eta\right\rangle$ is stable relative to every discrete subgroup of $G$.
\end{lemma}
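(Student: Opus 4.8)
The plan is to exploit the positivity of $\eta$ to remove all cancellation from the coefficient, and then to bound the Radon--Nikodym cocycle uniformly on a fixed relatively compact neighbourhood of the identity, using the decomposition $G=KP$ to push every coset representative into the compact set $K$.

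First I would write the coefficient explicitly. Since $1_{G/P}$ is the constant function equal to $1$ and $\eta\in L_{+}^{2}(G/P)$ is real and nonnegative,
\[
\left\langle \lambda_{G/P}(g)1_{G/P},\eta\right\rangle=\int_{G/P}\kappa(g,x)^{\frac12}\,\eta(x)\,d\nu(x)\geq 0 ,
\]
so this coefficient is a nonnegative real number and its modulus equals itself. This is the decisive structural point: the integrand is a product of nonnegative factors, so any two-sided multiplicative bound on one of the factors transfers directly to the whole integral. Next, fixing a relatively compact neighbourhood $V$ of $e$, I would use the cocycle identity $\kappa(\gamma g,x)=\kappa(\gamma,x)\,\kappa(g,\gamma^{-1}x)$, which follows at once from $\lambda_{G/P}(\gamma g)=\lambda_{G/P}(\gamma)\lambda_{G/P}(g)$, to rewrite
\[
\left\langle \lambda_{G/P}(\gamma g)1_{G/P},\eta\right\rangle=\int_{G/P}\kappa(\gamma,x)^{\frac12}\,\kappa(g,\gamma^{-1}x)^{\frac12}\,\eta(x)\,d\nu(x).
\]
It then suffices to bound the factor $\kappa(g,y)^{1/2}$ above and below by positive constants, uniformly over $g\in V$ and $y\in G/P$.

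The main step, and the only place where genuine content enters, is this uniform two-sided bound on the cocycle. Applying the Radon--Nikodym formula $\frac{dg^{-1}_{*}\nu}{d\nu}(x)=\frac{\Delta(gx)}{\Delta(x)}$ with $g^{-1}$ in place of $g$ gives $\kappa(g,y)=\Delta(g^{-1}y)/\Delta(y)$, a ratio which is well defined on $G/P$. Writing $y=kP$ with $k\in K$ (possible since $G=KP$), we get $\kappa(g,y)=\Delta(g^{-1}k)/\Delta(k)$, where $g^{-1}k$ and $k$ range over the compact set $(V^{-1}K)\cup K$ as $g$ runs over $V$ and $y$ over $G/P$. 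Since $\Delta$ is continuous and strictly positive, it attains a positive minimum and a finite maximum on this compact set, yielding constants $0<a\leq b$ with $a\leq \kappa(g,y)\leq b$ for all $g\in V$ and all $y\in G/P$.

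To conclude, I would insert these bounds into the integral above: because $\kappa(\gamma,x)^{1/2}\eta(x)\geq 0$, the bounded factor $\kappa(g,\gamma^{-1}x)^{1/2}\in[a^{1/2},b^{1/2}]$ can be pulled out as a two-sided estimate, giving
\[
a^{\frac12}\left\langle \lambda_{G/P}(\gamma)1_{G/P},\eta\right\rangle\leq \left\langle \lambda_{G/P}(\gamma g)1_{G/P},\eta\right\rangle\leq b^{\frac12}\left\langle \lambda_{G/P}(\gamma)1_{G/P},\eta\right\rangle ,
\]
which is exactly the stability inequality with $c=a^{1/2}$ and $C=b^{1/2}$. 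As these constants depend only on $V$ and $\Delta$, and not on $\gamma$, the estimate holds for every $\gamma$ in $G$, so the coefficient is stable relative to every discrete subgroup of $G$. I expect the uniform bound on $\kappa$ to be the only real obstacle; once one sees that $G=KP$ reduces every coset to a representative in the compact $K$, continuity of $\Delta$ finishes it, while positivity of $\eta$ is what makes the one-factor bound suffice.
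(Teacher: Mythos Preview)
Your proof is correct and follows essentially the same line as the paper's: both use the unitarity/cocycle identity to separate the $\gamma$-part from the $g$-part, then bound $\kappa(g,\cdot)^{1/2}$ uniformly for $g$ in a relatively compact neighbourhood of $e$ and conclude by positivity of $\eta$. The only cosmetic difference is that the paper appeals directly to continuity of $(g,x)\mapsto\kappa(g,x)^{1/2}$ on the compact set $\overline{V}\times G/P$ (compactness of $G/P$ being immediate from $G=KP$), whereas you unpack this via the explicit formula $\kappa(g,y)=\Delta(g^{-1}y)/\Delta(y)$ and representatives in $K$; this is the same argument written out one level deeper.
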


\begin{proof}
Let $\Gamma$ be a discrete subgroup of $G$. Let $V$ be a relatively compact neighborhood of $e$ in $G$, sufficiently small so that $\gamma \cdot V \cap \gamma'\cdot V =\emptyset$ for all $\gamma \neq \gamma' \in \Gamma$. We have $\left\langle \lambda_{G/P}(\gamma g)1_{G/P},\eta \right\rangle=\left\langle \lambda_{G/P}(g) 1_{G/P},\lambda_{G/P}(\gamma^{-1})\eta \right\rangle.$ That is $$\left\langle \lambda_{G/P}(\gamma g)1_{G/P},\eta \right\rangle=\int_{G/P}\kappa(g, x)^{\frac{1}{2}}\kappa(\gamma^{-1} , x)^{\frac{1}{2}}\eta(\gamma \cdot x) d\nu(x).$$ The function $(g,x)\in G\times G/P \mapsto \kappa(g,x)^{\frac{1}{2}}$ is a strictly positive continuous function. Therefore, as $\overline{V}$ and $G/P$ are compact, there exist $C,c>0$ such that for all $g\in \overline{V}$ and for all $x\in G/P$, we have $$c\leq\kappa(g , x)^{\frac{1}{2}}\leq C.$$ Notice that $\lambda_{G/P}$ is a positive representation. Therefore, since $\eta$ is in $L_{+}^{2}(G/P)$, we obtain for all $\gamma \in \Gamma$, for all $g\in V$ $$ c \left\langle \lambda_{G/P}(\gamma)1,\eta\right\rangle\leq \left\langle \lambda_{G/P}(\gamma g)1_{G/P},\eta \right\rangle\leq C \left\langle \lambda_{G/P}(\gamma )1_{G/P},\eta\right\rangle  .$$ 
\end{proof}

We obtain immediately that the Harish-Chandra function is stable relative to every discrete subgroup of $G$:
\begin{coro}$($\cite[\mbox{Proposition 4.6.3, p.~159}]{GV}$.)$ \label{stable}
The Harish-Chandra function is stable relative to every discrete subgroup of G.
\end{coro}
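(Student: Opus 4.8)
The plan is to deduce this corollary directly from Lemma \ref{stable'} by specializing the vector $\eta$ to the characteristic function $1_{G/P}$. Since the Harish-Chandra function is by definition the diagonal coefficient $\Xi(g) = \langle \lambda_{G/P}(g) 1_{G/P}, 1_{G/P}\rangle$, it suffices to verify that $1_{G/P}$ lies in the positive cone $L_{+}^{2}(G/P)$, so that Lemma \ref{stable'} applies with this particular choice of $\eta$.

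First I would check that $1_{G/P}$ is genuinely an element of $L^{2}(G/P, \nu)$. In the setting of Subsection \ref{qr'} we have $G = KP$ with $K$ compact, so the homogeneous space $G/P = KP/P$ is the continuous image of the compact set $K$ under the quotient map $G \to G/P$; hence $G/P$ is itself compact. The quasi-invariant measure $\nu$ is a regular Borel measure and is therefore finite on the compact space $G/P$, so the bounded function $1_{G/P}$ belongs to $L^{2}(G/P, \nu)$. Moreover $1_{G/P} \geq 0$ everywhere on $G/P$, and thus $1_{G/P} \in L_{+}^{2}(G/P)$.

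Applying Lemma \ref{stable'} with $\eta = 1_{G/P}$ then gives precisely that the coefficient $g \mapsto \langle \lambda_{G/P}(g) 1_{G/P}, 1_{G/P}\rangle = \Xi(g)$ is stable relative to every discrete subgroup of $G$, which is the assertion to be proved. There is no real obstacle here: the substantive content is contained in Lemma \ref{stable'}, and this corollary merely records the observation that the defining vector $1_{G/P}$ of the Harish-Chandra function meets the positivity and square-integrability hypotheses demanded there, the latter being guaranteed by the compactness of $G/P$.
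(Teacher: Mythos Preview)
Your proposal is correct and follows exactly the paper's approach: the paper states that the corollary is obtained ``immediately'' from Lemma~\ref{stable'}, and your argument simply spells out this immediate deduction by taking $\eta = 1_{G/P}$ and verifying that $1_{G/P} \in L^{2}_{+}(G/P)$ via the compactness of $G/P$.
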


Combining Corollary \ref{stable} with Lemma \ref{l1}, we obtain the following:

\begin{prop}\label{hcdiscret}
Let $G$ be a locally compact group decomposed as $G=KP$ where $K$ is a compact subgroup and $P$ is a closed subgroup. Let $\Gamma$ be a discrete subgroup of $G$ and let $L$ be a length function on $G$. There exists a constant $C>0$ such that for all $d\geq 1$ we have $$\sum_{\gamma \in \Gamma}\frac{\Xi^{2}(\gamma)}{(1+L(\gamma))^{d}}\leq C \int_{G}\frac{\Xi^{2}(g)}{(1+L(g))^{d}}dg.$$ 
\end{prop}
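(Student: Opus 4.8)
The plan is to directly combine the two tools that have just been established, namely Corollary~\ref{stable} and Lemma~\ref{l1}. Corollary~\ref{stable} tells us that the Harish-Chandra function $\Xi$ is stable relative to every discrete subgroup of $G$. Recall that by definition $\Xi(g)=\left\langle \lambda_{G/P}(g)1_{G/P},1_{G/P}\right\rangle$, so $\Xi$ is precisely the diagonal matrix coefficient of the quasi-regular representation $\lambda_{G/P}$ associated to the vectors $\xi=\eta=1_{G/P}$. This identification is the bridge between the abstract stability statement and Lemma~\ref{l1}, which is phrased for a general stable matrix coefficient.

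First I would invoke Lemma~\ref{l1} with the specific choices $\pi=\lambda_{G/P}$ and $\xi=\eta=1_{G/P}$. The hypothesis of Lemma~\ref{l1} is that $g\mapsto\left\langle \pi(g)\xi,\eta\right\rangle$ be stable relative to $\Gamma$; this is exactly the content of Corollary~\ref{stable}, since $\left\langle \lambda_{G/P}(g)1_{G/P},1_{G/P}\right\rangle=\Xi(g)$. The conclusion of Lemma~\ref{l1} then reads
\begin{equation*}
\sum_{\gamma\in\Gamma}\frac{\left|\left\langle \lambda_{G/P}(\gamma)1_{G/P},1_{G/P}\right\rangle\right|^{2}}{(1+L(\gamma))^{d}}\leq C\int_{G}\frac{\left|\left\langle \lambda_{G/P}(g)1_{G/P},1_{G/P}\right\rangle\right|^{2}}{(1+L(g))^{d}}dg.
\end{equation*}
Rewriting both matrix coefficients as $\Xi$ gives the desired inequality. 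The only small point to observe is that $\Xi(g)>0$ for all $g$ (as noted after its definition, $\Xi:G\to(0,\infty)$), so the absolute values simply disappear and $\left|\Xi(g)\right|^{2}=\Xi^{2}(g)$.

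There is really no substantive obstacle here: the statement is a direct corollary obtained by specializing the general Lemma~\ref{l1} to the diagonal coefficient $\Xi$ and reading off Corollary~\ref{stable} as the stability hypothesis. The constant $C$ is inherited verbatim from Lemma~\ref{l1}, where it was shown to depend only on the neighborhood $V$ and the constant furnished by Lemma~\ref{fonctionlong}; in particular it is independent of $d$, which is what the statement claims ("for all $d\geq1$"). The closest thing to a subtlety is making sure the decomposition $G=KP$ with $K$ compact and $P$ closed is in force so that $\lambda_{G/P}$, the measure $\nu$ on $G/P$, and hence $\Xi$ are all well defined as set up in Subsection~\ref{qr'}; this is precisely the standing hypothesis of the proposition, so nothing further is needed.
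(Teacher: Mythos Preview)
Your proposal is correct and follows exactly the approach of the paper, which states the result as an immediate consequence of combining Corollary~\ref{stable} (stability of $\Xi$) with Lemma~\ref{l1}. Your additional remarks---that $\Xi$ is the diagonal coefficient of $\lambda_{G/P}$ at $1_{G/P}$, that $\Xi>0$ removes the absolute values, and that the constant $C$ from Lemma~\ref{l1} is independent of $d$---are all accurate and make explicit what the paper leaves implicit.
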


The representation $\sigma:G \rightarrow U(\overline{\mathcal{H}} \otimes \mathcal{H})$ introduced in Subsection \ref{repre} satisfies for all $\xi,\eta \in \mathcal{H}$: $$\left\langle \sigma(g) \xi \otimes \xi, \eta \otimes \eta \right\rangle=\left|\left\langle\pi(g)\xi,\eta \right\rangle\right|^{2}.$$

The representation $\sigma$ can be used to give a short and elementary proof of the following result, due to C. Herz.

\begin{theorem} $($\cite{He}, C. Herz.$)$ \label{Rd}
 Let G be a connected semisimple Lie group with finite center. Then $G$ has property RD with respect to a length function associated to a left-invariant Riemannian metric on $G$.
\end{theorem}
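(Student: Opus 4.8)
The plan is to deduce Herz's theorem from the positivity of the left regular representation, combined with two analytic facts from the harmonic analysis of semisimple Lie groups, letting Proposition \ref{propositiv} (resp.\ Proposition \ref{main}) perform the passage from a pointwise condition to the uniform rapid decay inequality.

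First I would observe that the regular representation $\lambda_G$ on $L^{2}(G,dg)$ arises from the measurable action $G\curvearrowright(G,dg)$ with the invariant Haar measure, hence it is a \emph{positive} representation in the sense of Subsection \ref{sec3}. By Proposition \ref{propositiv} it therefore suffices to exhibit, for every positive vector $\xi\in L^{2}(G)_{+}$, an exponent $d_\xi\ge 1$ with
\[
\int_G \frac{\langle\lambda_G(g)\xi,\xi\rangle^{2}}{(1+L(g))^{d_\xi}}\,dg<\infty .
\]
Note that $\langle\lambda_G(g)\xi,\xi\rangle=\int_G\xi(g^{-1}h)\xi(h)\,dh\ge 0$, so this is a genuine diagonal coefficient of a positive vector.

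The argument then rests on two classical estimates for the Harish-Chandra function $\Xi$, which for semisimple $G$ is the spherical function attached to the decomposition $G=KP$ with $P$ a minimal parabolic subgroup. The first is an integrability statement: using the Cartan decomposition $G=KAK$, writing $dg=J(a)\,dk\,da\,dk'$ with $J(a)\asymp e^{2\rho(\log a)}$ on the positive Weyl chamber and $L(k_1ak_2)\asymp|\log a|$, together with Harish-Chandra's asymptotics $\Xi(a)\asymp p(\log a)\,e^{-\rho(\log a)}$, one sees that $\Xi(g)^{2}J$ is comparable to a polynomial in $|\log a|$, whence
\[
\int_G \frac{\Xi(g)^{2}}{(1+L(g))^{d}}\,dg<\infty
\]
for every $d$ large enough. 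The second is a majorization of the diagonal coefficient by $\Xi$: for each positive $\xi\in L^{2}(G)_{+}$ there should be a constant $C_\xi$ with $\langle\lambda_G(g)\xi,\xi\rangle\le C_\xi\,\Xi(g)$ for all $g\in G$. Granting these, I would fix $d$ so large that the displayed integral converges and estimate
\[
\int_G \frac{\langle\lambda_G(g)\xi,\xi\rangle^{2}}{(1+L(g))^{d}}\,dg\le C_\xi^{2}\int_G \frac{\Xi(g)^{2}}{(1+L(g))^{d}}\,dg<\infty,
\]
so that the condition of Proposition \ref{propositiv} holds (in fact with the same $d$ for every $\xi$), and property RD for $\lambda_G$ follows.

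The hard part will be the majorization $\langle\lambda_G(g)\xi,\xi\rangle\le C_\xi\Xi(g)$ valid for \emph{all} positive vectors, which is the substantive input from semisimple harmonic analysis (the principle going back to Harish-Chandra and Herz): it reflects that $\Xi$ is the extremal tempered spherical function and that the coefficients of the regular representation are tempered. I would establish it using the decomposition $G=KP$ and the explicit integral formula for $\Xi$, reducing by positivity and $K$-averaging to a comparison of the overlap integral $\int_G\xi(g^{-1}h)\xi(h)\,dh$ with $\Xi$. The representation $\sigma=\overline{\lambda_G}\otimes\lambda_G$ enters precisely in the closing step, since $|\langle\lambda_G(g)\xi,\eta\rangle|^{2}=\langle\sigma(g)\xi\otimes\xi,\eta\otimes\eta\rangle$ is exactly what allows Proposition \ref{main}, through the Banach--Steinhaus argument of Proposition \ref{enfin}, to convert the pointwise finiteness obtained above into the uniform rapid decay inequality of the definition.
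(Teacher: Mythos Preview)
The paper does not actually prove this theorem; after remarking that $\sigma$ ``can be used to give a short and elementary proof'', it simply refers to \cite{He}, \cite{C}, \cite{Boy}. Your proposal, however, has a genuine gap at the step you yourself flag as the hard part: the pointwise majorization $\langle\lambda_G(g)\xi,\xi\rangle\le C_\xi\,\Xi(g)$ for \emph{every} positive $\xi\in L^{2}(G)_{+}$ is false, and it remains false even if weakened to the weak inequality $\langle\lambda_G(g)\xi,\xi\rangle\le C_\xi(1+L(g))^{d}\Xi(g)$. This is precisely the content of the paper's own Corollary~\ref{coro2} applied with $H=\{e\}$, so that $\lambda_{G/H}=\lambda_G$: restricting $\lambda_G$ to the cyclic group generated by a nontrivial unipotent element (which has exponential growth with respect to $L$), one obtains via Proposition~\ref{propositiv} a single positive $\xi\in L^{2}(G)_{+}$ with $\sum_{\gamma\in\mathbb{Z}}\langle\lambda_G(\gamma)\xi,\xi\rangle^{2}(1+L(\gamma))^{-d}=\infty$ for every $d$, which together with Proposition~\ref{hcdiscret} and the integrability of $\Xi^{2}/(1+L)^{d_0}$ rules out any such pointwise bound.

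The heuristic you invoke---that matrix coefficients of $\lambda_G$ are tempered and hence dominated by $\Xi$---is valid only for special vectors (e.g.\ $K$-finite or compactly supported), not for arbitrary vectors in $L^{2}(G)$. The known proofs of Herz's theorem do not pass through a pointwise estimate valid for all $\xi\in L^{2}(G)_{+}$; they establish the operator-norm inequality $\|\lambda_G(f)\|\le C\|f\|_{L,d}$ directly (via the Kunze--Stein phenomenon, or by first proving RD for $\lambda_{G/P}$ as in \cite{Boy}). Your reduction via Proposition~\ref{propositiv} is therefore not a shortcut here: verifying its condition~(2) for every positive $\xi$ is, for $\lambda_G$, no easier than proving property RD itself.
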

See \cite{He},\cite{C},\cite{Boy} for proofs.

We can now easily prove Proposition \ref{uncoef}:

\begin{proof}
Consider the quasi-regular representation $\lambda_{G/P}$ associated to $P$ a minimal parabolic subgroup of $G$. In \cite{Boy}, we prove that this representation satifies property RD with respect to  $L$ where $L$ is associated to a left-invariant Riemannian metric. Hence, there exist $C>0$ and $d\geq1$ such that for all $\xi,\eta \in L^{2}(G/P)$ we have $$\int_{G}\frac{|\left\langle \pi(g)\xi,\eta \right\rangle|^{2}}{(1+L(g))^{d}}dg \leq C\| \xi\|^{2} \| \eta\|^{2}.$$
Applying the above inequality for $\xi=1_{G/P}$ and $\eta\in L_{+}^{2}(G/P)$,
and using Lemma \ref{stable'} and Lemma \ref{l1} we obtain for some $C'>0$ and for the same $d\geq 1$, that for all $\eta \in L^{2}_{+}(G/P)$
$$\sum_{\gamma \in \Gamma}\frac{\left|\left\langle \lambda_{G/P}(\gamma)1_{G/P},\eta\right\rangle \right|^{2}}{(1+L(\gamma))^{d}} \leq C'\| \eta \|.$$  
\end{proof}

\section{Proofs}\label{sec4}
\subsection{Proof of Proposition \ref{propositiv}}

We start by a lemma about positive representations.
	\begin{lemma}\label{vecteurspositifs}
	
Let $G$ be  a locally compact second countable group with a length function $L$. Let $\pi:G \rightarrow U(\mathcal{H})$ be a unitary representation with $\mathcal{H}=L^{2}(X,m)$ where $(X,m)$ is a measured space. Assume that $\pi$ is positive ($\pi $ preserves the cone of positive functions). The following assertions are equivalent.
\begin{enumerate}
	\item There exists $d\geq1$ such that for all vectors $\xi,\eta \in \mathcal{H}$ we have $$\int_{G}\frac{\left|\left\langle \pi(g) \xi,\eta \right\rangle\right|^{2}}{(1+L(g))^{d}}dg<\infty .$$ 
	\item There exists $d\geq1$ such that for all $\xi \in \mathcal{H}_{+}$ we have $$\int_{G}\frac{\left\langle \pi(g) \xi,\xi \right\rangle ^{2}}{(1+L(g))^{d}}dg<\infty.$$	
\end{enumerate}
\end{lemma}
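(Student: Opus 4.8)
The plan is to establish the nontrivial implication $(2)\Rightarrow(1)$, since $(1)\Rightarrow(2)$ is immediate: taking $\eta=\xi\in\mathcal{H}_{+}$ and using that $\pi$ is positive we get $\pi(g)\xi\in\mathcal{H}_{+}$, whence $\langle\pi(g)\xi,\xi\rangle=\int_{X}(\pi(g)\xi)(x)\,\overline{\xi(x)}\,dm(x)\geq 0$, so that $\langle\pi(g)\xi,\xi\rangle^{2}=\left|\langle\pi(g)\xi,\xi\rangle\right|^{2}$ and the integral in (2) is exactly the integral in (1) specialized to $\eta=\xi$. The same $d\geq 1$ therefore works.

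For $(2)\Rightarrow(1)$ I would first reduce to positive vectors. Any $\xi\in L^{2}(X,m)$ decomposes as $\xi=\sum_{j=1}^{4}c_{j}\xi_{j}$, where $\xi_{1},\xi_{2}$ are the positive and negative parts of $\mathrm{Re}\,\xi$, where $\xi_{3},\xi_{4}$ are the positive and negative parts of $\mathrm{Im}\,\xi$, so that each $\xi_{j}\in\mathcal{H}_{+}$, and where $c_{j}\in\{1,-1,i,-i\}$; similarly $\eta=\sum_{k=1}^{4}d_{k}\eta_{k}$ with $\eta_{k}\in\mathcal{H}_{+}$ and $|d_{k}|=1$. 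Expanding the coefficient and using the triangle inequality together with the elementary bound $\left(\sum_{j,k}a_{jk}\right)^{2}\leq 16\sum_{j,k}a_{jk}^{2}$ (Cauchy--Schwarz over the sixteen index pairs), we obtain
$$\left|\langle\pi(g)\xi,\eta\rangle\right|^{2}\leq 16\sum_{j,k}\left|\langle\pi(g)\xi_{j},\eta_{k}\rangle\right|^{2}.$$
Hence it suffices to bound $\int_{G}(1+L(g))^{-d}\left|\langle\pi(g)\xi_{j},\eta_{k}\rangle\right|^{2}\,dg$ for positive vectors.

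The key step is a domination inequality for positive vectors. Given $\xi,\eta\in\mathcal{H}_{+}$, set $\zeta=\xi+\eta\in\mathcal{H}_{+}$. Since $\pi$ is positive, $\pi(g)\xi,\pi(g)\eta,\pi(g)\zeta$ all lie in $\mathcal{H}_{+}$, so every term in the expansion
$$\langle\pi(g)\zeta,\zeta\rangle=\langle\pi(g)\xi,\xi\rangle+\langle\pi(g)\xi,\eta\rangle+\langle\pi(g)\eta,\xi\rangle+\langle\pi(g)\eta,\eta\rangle$$
is nonnegative. In particular $0\leq\langle\pi(g)\xi,\eta\rangle\leq\langle\pi(g)\zeta,\zeta\rangle$, and therefore $\left|\langle\pi(g)\xi,\eta\rangle\right|^{2}\leq\langle\pi(g)\zeta,\zeta\rangle^{2}$. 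Integrating against $(1+L(g))^{-d}\,dg$ and applying hypothesis (2) to the single positive vector $\zeta$ (with the uniform exponent $d$ furnished by (2)) shows the integral is finite.

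Putting the pieces together, the same $d\geq 1$ from (2) serves throughout: each of the sixteen integrals $\int_{G}(1+L(g))^{-d}\left|\langle\pi(g)\xi_{j},\eta_{k}\rangle\right|^{2}\,dg$ is finite by the domination step applied to $\zeta_{jk}=\xi_{j}+\eta_{k}$, and summing them yields condition (1) with that $d$. I expect the only genuine content to be the domination inequality, namely the observation that positivity forces all cross terms in the diagonal coefficient of $\xi+\eta$ to be nonnegative, so an off-diagonal coefficient is controlled by a diagonal one; the decomposition into positive parts and the finite Cauchy--Schwarz estimate are routine.
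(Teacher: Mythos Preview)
Your proof is correct and follows essentially the same route as the paper's: decompose arbitrary vectors into their four positive components, then use the key domination $\langle\pi(g)\xi',\eta'\rangle\leq\langle\pi(g)(\xi'+\eta'),(\xi'+\eta')\rangle$ for positive $\xi',\eta'$ (which is exactly the paper's observation), and handle the finite sum via Cauchy--Schwarz. Your write-up is in fact more explicit than the paper's, which compresses the last steps into the sentence ``Integration and Cauchy--Schwarz inequality complete the proof.''
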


\begin{proof}
$(1) \Rightarrow (2)$ is obvious.

Let us prove $(2) \Rightarrow (1)$.  Observe first that the decomposition of a real valued function $\xi$ into its positive and negative part satisfies $|\xi_{+}-\xi_{-}|\leq \xi_{+}+\xi_{-}$. By positivity of $\pi$, for all $g\in G$ we have $\pi(g)|\xi_{+}-\xi_{-}|\leq \pi(g)\xi_{+}+ \pi(g)\xi_{-}.$ 
  Using the decomposition of a complex valued function into its real and imaginary parts, and the decomposition of a real valued function into its positive and negative parts, we obtain, according to the above observation, that $|\left\langle \pi(g)\xi,\eta\right\rangle|$ is less than or equal to a linear combination of matrix coefficients $\left\langle\pi(g)\xi',\eta' \right\rangle$ with  $\xi',\eta'$ positive vectors in $\mathcal{H}$. Now observe that for positive vectors, $$\left\langle \pi(g)\xi',\eta'\right\rangle \leq \left\langle \pi(g)(\xi'+\eta'),(\xi'+\eta')\right\rangle.$$
	Integration and Cauchy-Schwarz inequality complete the proof.
	
\end{proof}

We prove Proposition \ref{propositiv}:
\begin{proof}
$(1) \Rightarrow (2)$ is clear.\\
$(2) \Rightarrow (1)$. Thanks to the equivalence between $(2)$ and $(4)$ in Proposition \ref{main} with $d\mu(g)=\frac{dg}{(1+L(g))^{d}}$ for some $d\geq 1$, it is enough to prove that there exists $d \geq 1$, such that for each pair of vectors $\xi,\eta \in \mathcal{H}$ we have $$\int_{G}\frac{\left|\left\langle \pi(g) \xi,\eta\right\rangle\right|^{2}}{(1+L(g))^{d}}dg<\infty.$$ According to Lemma $\ref{vecteurspositifs}$ it is equivalent to prove that there exists $d\geq 1$ such that for any positive function $\xi \in L^{2}(X,m)$ we have $$\int_{G}\frac{ \left\langle \pi(g) \xi,\xi\right\rangle^{2}}{ (1+L(g))^{d} } dg<\infty.$$ We give a proof by contraposition. 

Assume that for each $n \in \mathbb{N}$, there exists a unit positive vector $\xi_{n}$ such that $$\int_{G}\frac{\left\langle \pi(g) \xi_{n},\xi_{n}\right\rangle ^{2}}{(1+L(g))^{n}} dg=\infty.$$ Take a sequence $(a_{n})_{n\in\mathbb{N}}$ of strictly positive real numbers such that the series $\sum_{n}a_{n}$ converges. We consider $$\xi=\sum_{n}a_{n}\xi_{n}$$ which is a well defined positive element of $\mathcal{H}$. We can assume that $\xi\neq 0$. We can replace $\xi$ by $\frac{\xi}{\|\xi\|}$ which is a unit vector, so we assume that $\xi$ is a unit vector. Let $d$ be a positive real number. Let $n$ be an integer such that $n\geq d$. Notice that $\left\langle \pi(g) \xi_{n},\xi_{m}\right\rangle \geq 0$ for all $n,m \in \mathbb{N}$ and for all $g$ in $G$. Hence,
\begin{eqnarray*}
\int_{G}\frac{\left\langle \pi(g) \xi,\xi\right\rangle ^{2}}{(1+L(g))^{d}}dg&\geq& \int_{G}\frac{\left\langle \pi(g) \xi,\xi\right\rangle ^{2}}{(1+L(g))^{n}}dg \\
                                                                         &\geq& a_{n}^{4}\int_{G}\frac{\left\langle \pi(g) \xi_{n},\xi_{n}\right\rangle ^{2}}{(1+L(g))^{n}}dg=\infty. 
\end{eqnarray*}
Finally we have found a unit positive vector $\xi$, such that for all $d\geq 1$ we have $$\int_{G}\frac{ \left\langle \pi(g) \xi,\xi\right\rangle ^{2}}{(1+L(g))^{d}}dg=\infty.$$ 
\end{proof}

\subsection{Proof of Theorem \ref{main}}
We state a very useful lemma due to Y. Shalom in \cite[Lemma 2.3]{S}:
\begin{lemma}\label{Shalom}
Let $\pi: G\rightarrow U(\mathcal{H})$ be a unitary representation. Assume that there exists a non-zero positive vector $\xi$ in the following sense: $$\langle \pi(g) \xi,\xi\rangle \geq 0,\forall g \in G .$$
Then for any  bounded measure positive measure $\mu$ on $G$ we have $$\|\lambda(\mu)\|_{op}\leq \|\pi(\mu)\|_{op}.$$
\end{lemma}

We prove Theorem \ref{main}.

\begin{proof}
According to Proposition \ref{propositiv}, it is enough to show that $\pi_{\alpha}$ does not satisfy property RD with respect to $L$. Suppose $\pi_{\alpha}$ has property RD with respect to $L$; we will find a contradiction. Let $\mathbb{Z}\subset \Gamma$ be a subgroup of exponential growth with respect to $L$. The restriction $\pi_{\mathbb{Z}}$ of $\pi_{\alpha}$ to $\mathbb{Z}$ would satisfy property RD with respect to $L$. Hence, according to Lemma \ref{Shalom} the left regular representation $\lambda_{\mathbb{Z}}$ would have property RD with respect to $L$ because of Lemma \ref{vecteurspositifs}. This is impossible because an amenable group can satisfy property RD with respect to $L$ if and only if it is a group of polynomial growth with respect to $L$ (see \cite[Proposition B, (2)]{Jo} ).  
 
\end{proof}

\subsection{Examples of representations associated to group actions without non zero invariant vectors}\label{invariant}
\subsubsection{Measure preserving ergodic group actions on infinite measured space}
Let $G$ be locally compact group group acting ergodically on a $\sigma$-finite measured space via $\alpha:G \curvearrowright (X,m)$. If $\pi_{\alpha}:G\rightarrow U\left(L^{2}(X,m)\right)$ has an invariant vector $\xi$, there exists a measurable function $\widetilde{\xi}$ with $\int_{X}\widetilde{\xi}^{2}(x)dm(x)<\infty$, which is $G$-invariant and satisfying $\widetilde{\xi}(x)=\xi(x)$ almost everywhere with respect to $m$ (see \cite[2.2.16 Lemma]{Zi}). Since the action is ergodic, $\widetilde{\xi}$ is a constant function on a conull set. Hence, if $m$ is an infinite $G$-invariant measure then $\pi_{\alpha}:G\rightarrow U\left(L^{2}(X,m)\right)$ does not contain non zero invariant vectors. 

\subsubsection{Nonsingular ergodic group actions of type $III_{1}$}
Let $\Gamma$ be a discrete countable group. A nonsingular action $\alpha:\Gamma\curvearrowright (X,m)$ of $\Gamma$ is an action of $\Gamma$ on the measured space $(X,m)$ such that $m$ is a $\Gamma$-quasi-invariant measure and there is no $\Gamma$-invariant measures in the measure class of $m$.
In the context of operator algebras we say that the equivalence relation produced by  $\Gamma\curvearrowright (X,m)$ is of type $III$. There exist different types of equivalence relation of type $III$ (see \cite[Chapter XIII, \S 2]{Ta}). We consider only the type $III_{1}$ case. 

Consider the Maharam extension (see \cite{Ma}) $\Gamma\curvearrowright (X\times\mathbb{R},dm \otimes e^{-t}dt)$ where $\Gamma$ acts by measure preserving transformations in the following way $$\gamma\cdot(x,t)=\left(\gamma\cdot x,t+\log \left(\frac{d{\gamma^{-1}_ {*}m}}{dm}(x)\right)\right).$$ We denote by $$\rho_{\alpha}:\Gamma \rightarrow U(L^{2}(X\times\mathbb{R},dm \otimes e^{-t}dt)),$$ the unitary representation associated to the Maharam extension. It is well known that if $\Gamma\curvearrowright (X,m)$ is of type $III_{1}$ then the action $\Gamma\curvearrowright (X\times\mathbb{R},dm \otimes e^{-t}dt)$ is ergodic. See \cite[Section 2]{Ko} for a survey about the type $III$ case. We have 

\begin{prop}
If $\alpha:\Gamma\curvearrowright (X,m)$ is of type $III_{1}$ then $\pi_{\alpha}$ does not have non zero invariant vectors.
\end{prop}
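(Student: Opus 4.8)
The plan is to reduce the statement to the ergodicity of the Maharam extension. I would argue by contradiction: from a hypothetical nonzero $\pi_{\alpha}$-invariant vector I will manufacture a $\Gamma$-invariant measurable function on $X\times\mathbb{R}$ that is visibly non-constant, which is incompatible with the ergodicity of the Maharam action recalled above.

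First I would unwind what invariance means. By the defining formula \eqref{defiqr}, a vector $\xi\in L^{2}(X,m)$ satisfies $\pi_{\alpha}(\gamma)\xi=\xi$ for all $\gamma\in\Gamma$ precisely when
\[
\left(\frac{d\gamma_{*}m}{dm}(x)\right)^{1/2}\xi(\gamma^{-1}\cdot x)=\xi(x)\quad\text{for a.e. }x;
\]
replacing $\gamma$ by $\gamma^{-1}$, this is the same as $\left(\frac{d\gamma^{-1}_{*}m}{dm}(x)\right)^{1/2}\xi(\gamma\cdot x)=\xi(x)$.

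The key device is then to set $F(x,t):=\xi(x)e^{t/2}$ on $X\times\mathbb{R}$ and to check that $F$ is invariant under the Maharam action. Using $\gamma\cdot(x,t)=\bigl(\gamma\cdot x,\,t+\log\frac{d\gamma^{-1}_{*}m}{dm}(x)\bigr)$ one computes
\[
F(\gamma\cdot(x,t))=\xi(\gamma\cdot x)\,e^{t/2}\left(\frac{d\gamma^{-1}_{*}m}{dm}(x)\right)^{1/2},
\]
and the invariance relation for $\xi$ collapses the right-hand side to $\xi(x)e^{t/2}=F(x,t)$. The exponent $1/2$ in the weight $e^{t/2}$ is forced exactly by the unitary normalization in the definition of $\pi_{\alpha}$; getting this Radon-Nikodym bookkeeping right (the correct power, and $\gamma$ versus $\gamma^{-1}$) is the one delicate point, and I expect it to be the main obstacle.

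Finally I would invoke ergodicity. Since $\alpha$ is of type $III_{1}$, the Maharam extension is an ergodic \emph{measure-preserving} action on the $\sigma$-finite space $(X\times\mathbb{R},dm\otimes e^{-t}dt)$, so every $\Gamma$-invariant measurable function on it is almost everywhere constant. Applied to $F$, and using that $t\mapsto e^{t/2}$ is non-constant, this forces $\xi=0$: by Fubini, a.e.\ constancy of $\xi(x)e^{t/2}$ would require, for a.e.\ $x$, that $\xi(x)e^{t/2}$ be constant in $t$, which happens only when $\xi(x)=0$. I would stress that $F$ is in general not square-integrable (the fiber integral $\int_{\mathbb{R}}e^{t}\,e^{-t}\,dt$ diverges), so one must use the ``invariant functions are a.e.\ constant'' form of ergodicity rather than the $L^{2}$ argument of Subsection \ref{invariant}. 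The resulting contradiction shows $\pi_{\alpha}$ has no nonzero invariant vector.
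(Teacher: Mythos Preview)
Your proof is correct and follows essentially the same route as the paper: build $F(x,t)=\xi(x)e^{t/2}$, verify it is invariant under the Maharam action, and use type $III_{1}$ ergodicity of the extension to force $F$ (hence $\xi$) to vanish. The only cosmetic difference is the endgame: the paper first deduces that $\xi$ is a.e.\ constant and then invokes the nonsingularity hypothesis to exclude a nonzero constant (since that would force $\frac{d\gamma_{*}m}{dm}\equiv 1$), whereas you argue directly via Fubini that a.e.\ constancy of $\xi(x)e^{t/2}$ already forces $\xi=0$; your version is slightly more direct and avoids that extra appeal.
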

 \begin{proof}
Assume that $\pi_{\alpha}$ has an invariant vector. There exists a measurable function $\xi$ such that for all $\gamma$ in $\Gamma$, $\pi_{\alpha}(\gamma)\xi(x)=\xi(x)$ almost everywhere with respect to $m$. We shall prove that $\xi$ represents the null vector. Consider the function $$F(x,t)=\xi(x)e^{\frac{t}{2}}.$$ Observe that $\rho_{\alpha}(\gamma)F(x,t)=F(x,t)$ almost everywhere with respect to $dm\otimes e^{-t}dt$. There exists a measurable function $\widetilde{F}$ satisfying $\widetilde{F}=F$  almost everywhere with respect to $dm\otimes e^{-t}dt$, such that $\widetilde{F}$ is a $\Gamma$-invariant function (see \cite[2.2.16 Lemma]{Zi}). Since the action  $\Gamma\curvearrowright (X\times\mathbb{R},dm \otimes e^{-t}dt)$ is ergodic, $\widetilde{F}$ is a constant function on a conull set of $X\times\mathbb{R}$. Hence $\xi$ is a constant function on a conull set of $X$. Therefore $\xi=0$ almost everywhere with respect to $m$ (otherwise $\frac{d\gamma_{*}m}{dm}(x)=1$ almost everywhere which is excluded by hypothesis). 
\end{proof}

\subsubsection{Quasi-regular representation associated to a closed subgroup of a simple Lie group}

\begin{prop}\label{qrinvariant}
Let $G$ be a connected non-compact simple Lie group. Let $H$ be a closed subgroup of $G$ such that $G/H$ carries no finite $G$-invariant measure. Let $\Gamma$ be a lattice of $G$. Then the unitary representation  ${\lambda_{G/H}}_{|_{\Gamma}}:\Gamma \rightarrow U\left(L^{2}(G/H)\right)$ does not contain non zero invariant vectors.
\end{prop}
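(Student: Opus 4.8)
The plan is to reduce the statement to two facts: that $\lambda_{G/H}$ has \emph{no} nonzero $G$-invariant vector (this is where the hypothesis on $G/H$ enters), and the Howe--Moore theorem, which then forces every matrix coefficient of $\lambda_{G/H}$ to vanish at infinity on $G$. Once coefficients decay, the absence of $\Gamma$-invariant vectors is immediate, because a $\Gamma$-invariant vector would produce a diagonal coefficient that stays bounded below along $\Gamma$, while $\Gamma$ escapes every compact subset of $G$.

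First I would translate invariant vectors into invariant measures. Suppose $\xi\in L^{2}(G/H,\nu)$ satisfies $\lambda_{G/H}(g)\xi=\xi$ for all $g\in G$. Using the defining formula $(\lambda_{G/H}(g)\xi)(x)=c_{g}(x)^{1/2}\xi(g^{-1}\cdot x)$ with $c_{g}(x)=\frac{dg_{*}\nu}{d\nu}(x)$ (see equation \eqref{defiqr} in Subsection \ref{sec3}) and applying it to $g^{-1}$, taking squared moduli gives $\frac{dg^{-1}_{*}\nu}{d\nu}(x)\,|\xi(g\cdot x)|^{2}=|\xi(x)|^{2}$ almost everywhere. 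Combining this with the change-of-variables identity $\int\psi(x)\,d\nu(x)=\int\psi(g\cdot x)\frac{dg^{-1}_{*}\nu}{d\nu}(x)\,d\nu(x)$ recalled in Subsection \ref{sec3}, a short computation shows that the \emph{finite} measure $d\mu:=|\xi|^{2}\,d\nu$ is genuinely $G$-invariant (not merely quasi-invariant), with total mass $\|\xi\|^{2}$. Since by hypothesis $G/H$ carries no nonzero finite $G$-invariant measure, $\mu=0$, hence $\xi=0$. Thus $\lambda_{G/H}$ has no nonzero $G$-invariant vector.

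Next I would invoke the Howe--Moore theorem: as $G$ is a connected non-compact simple Lie group with finite center and $\lambda_{G/H}$ has no nonzero $G$-invariant vector, every matrix coefficient $g\mapsto\left\langle\lambda_{G/H}(g)\zeta,\zeta'\right\rangle$ tends to $0$ as $g\to\infty$ in $G$. Now assume, for contradiction, that $\xi$ is a nonzero $\Gamma$-invariant vector. Then $\left\langle\lambda_{G/H}(\gamma)\xi,\xi\right\rangle=\|\xi\|^{2}>0$ for every $\gamma\in\Gamma$. Because $G$ is non-compact the lattice $\Gamma$ is infinite, and being discrete it meets each compact subset of $G$ in a finite set; hence there is a sequence $(\gamma_{n})$ in $\Gamma$ with $\gamma_{n}\to\infty$. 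Applying the Howe--Moore decay along $(\gamma_{n})$ yields $\|\xi\|^{2}=\lim_{n}\left\langle\lambda_{G/H}(\gamma_{n})\xi,\xi\right\rangle=0$, a contradiction. Therefore no nonzero $\Gamma$-invariant vector exists.

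The routine parts are the measure computation in the first step (checking the measure is honestly invariant rather than only quasi-invariant) and the elementary observation that an infinite discrete subgroup of a non-compact group escapes to infinity. The substantive input, and the only real obstacle, is the Howe--Moore theorem (equivalently the Mautner phenomenon): it is exactly what converts the absence of $G$-invariant vectors into decay of coefficients, after which the discreteness of $\Gamma$ does the remaining work. This is the single place where simplicity and finiteness of the center of $G$ are used.
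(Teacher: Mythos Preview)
Your proof is correct and follows essentially the same route as the paper: first establish that $\lambda_{G/H}$ has no nonzero $G$-invariant vector (the paper cites the standard characterization for $\mathrm{Ind}_{H}^{G}(1_{H})$ from \cite{Be}, while you compute directly that an invariant vector produces a finite invariant measure on $G/H$), and then invoke Moore's theorem \cite[Theorem 2.2.19]{Zi} (equivalently, Howe--Moore decay of matrix coefficients) to pass from $G$ to the noncompact closed subgroup $\Gamma$. The only cosmetic difference is that the paper applies Moore's theorem as a black box giving absence of $\Gamma$-invariant vectors, whereas you unwind the Howe--Moore formulation via a short contradiction with a sequence $\gamma_{n}\to\infty$.
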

\begin{proof}
Consider the quasi-regular representation $\lambda_{G/H}:G \rightarrow U\left(L^{2}(G/H)\right)$ associated to $H$. The representation $\lambda_{G/H}$ can be identified with the induced representation  $Ind^{G}_{H}(1_{H})$ of $G$ associated to the trivial representation of $H$, see \cite[Example E.1.8 (ii)]{Be}. It is well known that $Ind^{G}_{H}(1_{H})$ has a non zero invariant vector if and only if $G/H$ carries a finite invariant regular Borel measure (\cite[Theorem E.3.1]{Be}). Hence $\lambda_{G/H}$ does not contain non zero invariant vectors. Combining this observation with Moore's Theorem \cite[2.2.19 Theorem]{Zi} we obtain that ${\lambda_{G/H}}_{|_{\Gamma}}$ does not contain non zero invariant vectors.
\end{proof}

\subsection{Proof of Corollary \ref{coro1}}

We give a proof of Corollary \ref{coro1}:
\begin{proof}
It is known that $\Gamma$ as in the corollary contains a cyclic subgroup with exponential growth with respect to any left-invariant Riemannian metric on $G$. See \cite[Theorem A]{Lu} for the higher rank case. The rank 1 one case is well known: horospherical subgroups of $\Gamma$ have exponential growth with respect to any left-invariant Riemannian metric, see \cite[Chapter 3, Section 3.C]{Gr} and \cite[Proposition 8. 25, p. 275 ]{BH}. So the result
follows from Theorem \ref{maintheo}, Proposition \ref{qrinvariant} ensuring that we are not in the
trivial case of Theorem \ref{maintheo}.
\end{proof}

\subsection{Proof of Corollary \ref{coro2}}

\begin{proof}
Let $G$ be a non compact semisimple Lie group endowed with a length function $L$ associated to a left-invariant Riemannian metric. Let $H$ be a closed subgroup of $G$. Let $\lambda_{G/H}:G \rightarrow U\left(L^{2}(G/H)\right)$ be the quasi-regular representation associated to $H$.\\
Let $u\in G$ be a unipotent element of infinite order. Let $\mathbb{Z}$ be the cyclic subgroup generated by $u$. Observe that $\mathbb{Z}$ is discrete and is an amenable group with exponential growth with respect to $L$. Thus it can not satisfy property RD with respect to $L$ by \cite[Proposition B, (2)]{Jo}. Since ${\lambda_{G/H}}_{|_{\mathbb{Z}}}$ has a positive vector (e.g. the characteristic function
of any subset of positive, finite measure) according to Lemma \ref{Shalom}, the representation ${\lambda_{G/H}}_{|_{\mathbb{Z}}}$ does not satisfy property RD with respect to $L$. Thanks to Proposition \ref{propositiv}  (2) applied to ${\lambda_{G/H}}_{|_{\mathbb{Z}}}$, there exists $\xi \in L^{2}(G/H)_{+}$ such that for all $d\geq 1$ we have $$\sum_{\gamma \in \mathbb{Z}}\frac{\left\langle \lambda_{G/H}(\gamma)\xi,\xi\right\rangle^{2}}{(1+L(\gamma))^{d}}=\infty.$$  
We claim that the weak inequality fails for the coefficient $\left\langle \lambda_{G/H}(g)\xi,\xi \right\rangle$. Assume on the contrary that it holds. There would exist $C_{\xi}$ and $d_{\xi}$ such that for all $g\in G $ $\left\langle \lambda_{G/H}(g) \xi,\xi\right\rangle \leq C_{\xi}(1+L(g))^{d_{\xi}}\Xi(g)$. For any $d_{0}>0$, we would have:
\begin{align*}
\sum_{\gamma \in \mathbb{Z}}\frac{\left\langle \lambda_{G/H}(\gamma)\xi,\xi\right\rangle^{2}}{(1+L(\gamma))^{2d_{\xi}+d_{0}}}& \leq C_{\xi}^{2} \sum_{\gamma \in \mathbb{Z}}\frac{(1+L(\gamma))^{2d_{\xi}}\Xi(\gamma)^{2}}{(1+L(\gamma))^{2d_{\xi}+d_{0}}}\\
&= C_{\xi}^{2} \sum_{\gamma \in \mathbb{Z}}\frac{\Xi(\gamma)^{2}}{(1+L(\gamma))^{d_{0}}} \\
&\leq C^{2}_{\xi}C \int_{G}\frac{\Xi(g)^{2}}{(1+L(g))^{d_{0}}}dg,
\end{align*}
where the last inequality follows from Proposition \ref{hcdiscret}.

Since $G$ is a semisimple Lie group, we can find $d_{0}>1$ such that $$\int_{G}\frac{\Xi^{2}(g)}{(1+L(g))^{d_{0}}}dg<\infty , $$ see
\cite[Chap 4, Theorem 4.6.4, p.161]{GV} for a reference. It would follow that $$\sum_{\gamma \in \mathbb{Z}}\frac{\left\langle \lambda_{G/H}(\gamma)\xi,\xi\right\rangle^{2}}{(1+L(\gamma))^{2d_{\xi}+d_{0}}}< \infty. $$ This is a contradiction.   
\end{proof}

\end{document}